\newenvironment{proofof}{\textit{Proof of Theorem \ref{sec:introduction-1}.} }{ ~ \hfill $\Box$\\\smallskip}
\newtheorem{theorem}{Theorem}[section]
\newtheorem{lemma}[theorem]{Lemma}
\newtheorem{example}[theorem]{Example}
\newcommand{\Q}{\mathbb{Q}}
\newcommand{\C}{\mathbb{C}}
\newcommand{\R}{\mathbb{R}}
\DeclareMathOperator{\image}{im}
\DeclareMathOperator{\id}{Id}
\DeclareMathOperator{\aut}{Aut}
\DeclareMathOperator{\Diff}{Diff}
\title[Invariant metrics on quasitoric manifolds]{Moduli spaces of invariant metrics of positive scalar curvature on quasitoric manifolds}
\author{Michael Wiemeler}
\address{
Mathematisches Institut\\
WWU M\"unster\\
Einsteinstra\ss{}e 62\\
D-48149 M\"unster\\
Germany}
\email{wiemelerm@uni-muenster.de}
\thanks{The research for this paper was supported by DFG grant HA 3160/6-1.
}
\begin{document}

\begin{abstract}
  We show that the higher homotopy groups of the moduli space of torus-invariant positive scalar curvature metrics on certain quasitoric manifolds are non-trivial.
\end{abstract}

\maketitle


\section{Introduction}

In recent years a lot of work was devoted to the study of the homotopy groups of
the space of Riemannian metrics of positive scalar curvature on a
given closed, connected manifold and its moduli space, see for example the papers \cite{MR2680210}, \cite{MR3270591}, \cite{botvinnik14:_infin}, \cite{MR3268776}, \cite{MR3043028}, \cite{MR2789750}, \cite{wraith16:_non} and the book \cite{MR3445334}.
As far as the moduli space is concerned these results are usually only
for the so-called observer moduli space of positive scalar curvature
metrics, not for the naive moduli space.

The definition of the naive and the observer moduli space are as follows.
The diffeomorphism group of a manifold \(M\) acts by pullback on the space of metrics of positive scalar curvature on \(M\).
The naive moduli space of metrics of positive scalar curvature on \(M\) is the orbit space of this action.

The observer moduli space of metrics is the orbit space of the action of a certain subgroup of the diffeomorphism group, the so-called observer diffeomorphism group.
It consists out of those diffeomorphisms \(\varphi\), which fix some point \(x_0\in M\) (independent of \(\varphi\)) and whose differential \(D_{x_0}\varphi:T_{x_0}M\rightarrow T_{x_0}M\) at \(x_0\) is the identity.

This group does not contain any compact Lie subgroup and therefore acts freely on the space of metrics on \(M\).
Hence, the observer moduli space can be treated from a homotopy theoretic view point more easily than the naive moduli space.

In this paper we deal with the equivariant version of the above
problem: We assume that there is a torus \(T\) acting effectively on the
manifold and that all our metrics are invariant under this torus
action.
To be more specific we study invariant metrics on so-called torus manifolds and quasitoric manifolds.

A torus manifold is a \(2n\)-dimensional manifold with a smooth effective action of an \(n\)-dimensional torus such that there are torus fixed points in the manifold.
Here an action of a torus \(T\) is called effective, if the intersection of the isotropy groups of all points in the manifold is the trivial group.

Such a manifold is called locally standard if it is locally weakly equivariantly diffeomorphic to the standard representation  of \(T=(S^1)^n\) on \(\C^n\), i.e. for each orbit \(Tx\subset M\) there is a neighborhood \(Tx\subset U\subset M\), a diffeomorphism \(\varphi: U\rightarrow V\subset \C^n\) and an automorphism \(\psi\) of \(T\) such that for each \(t\in T\) and \(y\in U\):
\begin{equation*}
  \varphi(ty)=\psi(t)\varphi(y).
\end{equation*}
If \(M\) is locally standard, then the orbit space of the \(T\)-action on \(M\) is naturally a manifold with corners.
\(M\) is called quasitoric if it is locally standard and \(M/T\) is diffeomorphic to a simple convex polytope.

In this paper we use the following notations:
Let \(M\) be a compact manifold.
For a compact connected Lie subgroup \(G\) of \(\Diff(M)\) we denote by
\begin{itemize}
\item \(\mathcal{R}(M,G)\) the space of \(G\)-invariant metrics on \(M\)
\item \(\mathcal{R}^+(M,G)\) the space of \(G\)-invariant metrics of positive scalar curvature on \(M\).
\item \(\mathcal{D}(M,G)=N_{\Diff(M)}(G)/G\) the normalizer of \(G\) in \(\Diff(M)\) modulo \(G\).
\item \(\mathcal{M}(M,G)=\mathcal{R}(M,G)/\mathcal{D}(M,G)\), here the action of \(\mathcal{D}(M,G)\) on \(\mathcal{R}(M,G)\) is given by pullbacks of metrics.
\item \(\mathcal{M}^+(M,G)=\mathcal{R}^+(M,G)/\mathcal{D}(M,G)\), here the action is the restriction of the action on \(\mathcal{R}(M,G)\) to \(\mathcal{R}^+(M,G)\).
\end{itemize}

We equip all these spaces and groups with the \(C^\infty\)-topology or the quotient topology, respectively.

With this notation our main result is as follows:

\begin{theorem}[{Theorem \ref{sec:pi_km-non-triv}}]
\label{sec:introduction}
  There are quasitoric manifolds \(M\) of dimension \(2n\) such that
  for \(0<k<\frac{n}{6}-7\), \(n\) odd and \(k\equiv 0\mod 4\),
  \(\pi_k(\mathcal{M}^+)\otimes \Q\) is non-trivial, where
  \(\mathcal{M}^+\) is some component of \(\mathcal{M}^+(M;T)\).
\end{theorem}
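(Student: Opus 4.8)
\smallskip
\noindent\textit{Proof strategy.}
The plan is to produce, for a suitable $M$, a single $k$-parameter family of $T$-invariant metrics of positive scalar curvature on $M$ whose class in $\pi_k(\mathcal{M}^+(M,T))$ is rationally non-trivial, and to detect this class by comparing $\mathcal{M}^+(M,T)$ with the classifying space $B\mathcal{D}(M,T)$. First I would fix $M$ to be a \emph{spin} quasitoric manifold of dimension $2n$; such manifolds exist in every even dimension as soon as the defining polytope has enough facets and its characteristic matrix satisfies the usual congruence condition. Being quasitoric, $M$ is simply connected and carries a $T$-invariant metric $g_0$ of positive scalar curvature; after a generic perturbation inside $\mathcal{R}^+(M,T)$ I may assume that $g_0$ has trivial isometry group inside $\mathcal{D}(M,T)$ and that $g_0$ is standard on an equivariant disc neighbourhood of a chosen $T$-fixed point $p$. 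The family will be supported near $p$, and the component $\mathcal{M}^+$ of the statement is the one containing $[g_0]$.

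The first input is diffeomorphism-theoretic. Compactly supported $T^n$-equivariant diffeomorphisms of the standard representation on $\C^n$ --- realised as equivariant diffeomorphisms of $M$ supported near $p$ --- map to $N_{\Diff(M)}(T)$ and hence to $\mathcal{D}(M,T)$. By equivariant smoothing theory this group is, in a range, rationally equivalent to $\Diff_c(\R^n)$, the group of compactly supported diffeomorphisms of the $n$-dimensional orbit space $\C^n/T^n$; and by the Farrell--Hsiang computation $\pi_k(B\Diff_c(\R^n))\otimes\Q$ is non-zero in degrees $k\equiv 0\bmod 4$, the contribution coming from the algebraic $K$-theory of $\mathbb Z$, as long as $k$ lies in the concordance-stable range. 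Feeding this through the maps above --- and checking, by means of a universal higher torsion characteristic class, that these classes are not annihilated by $B\Diff_c(\R^n)\to B\mathcal{D}(M,T)$ --- yields a non-zero $\beta\in\pi_k(B\mathcal{D}(M,T))\otimes\Q$ for $k\equiv 0\bmod 4$ in a range. The final bound $0<k<\tfrac n6-7$ is what survives after intersecting this concordance-stable range with the range of validity of the constructions in the next step.

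The second step realises $\beta$ by invariant positive scalar curvature metrics. Let $M\to E_\beta\to S^k$ be the $T$-manifold bundle classified by $\beta$; outside a disc-bundle neighbourhood of the fixed-point section it is the trivial bundle carrying $g_0$. Its fibrewise spin Dirac operator has a family index in $KO^{-2n}(S^k)$; since $M$ admits a positive scalar curvature metric its $\alpha$-invariant vanishes, and since $n$ is odd and $k\equiv 0\bmod 4$ the group $\widetilde{KO}^{-2n}(S^k)\otimes\Q\cong KO_{k+2n}(\mathrm{pt})\otimes\Q$ is trivial, so this index is rationally trivial; replacing $\beta$ by a non-zero multiple makes it vanish integrally. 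A $T$-invariant, family version of the Gromov--Lawson--Chernysh surgery construction, performed over the disc bundle so as to match the standard germ of $g_0$ near $p$, then produces a fibrewise $T$-invariant positive scalar curvature metric on $E_\beta$. This data is a map $S^k\to\mathcal{M}^+(M,T)$ landing in the component $\mathcal{M}^+$ of $[g_0]$; comparing with $B\mathcal{D}(M,T)$ --- which, by the genericity of $g_0$, is faithful rationally near the image --- its class maps to $\beta\neq 0$, so $\pi_k(\mathcal{M}^+)\otimes\Q\neq 0$.

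The hardest part is the equivariant parametrised fill-in in the third step: one must construct the fibrewise $T$-invariant positive scalar curvature metric over the exotic disc bundle while simultaneously respecting the torus action and the corner structure of the orbit space, which requires an equivariant elaboration of the parametrised surgery and index-theoretic machinery of Gromov--Lawson--Chernysh and Botvinnik--Ebert--Randal-Williams; this is also the step where the dimensional hypotheses are most stringent, and it is what shapes the bound $\tfrac n6-7$. The remaining ingredients --- the existence of spin quasitoric models, the equivariant smoothing comparison with $\Diff_c(\R^n)$, the Farrell--Hsiang classes and their survival under $B\Diff_c(\R^n)\to B\mathcal{D}(M,T)$, and the generic-perturbation argument making the passage to the naive moduli space harmless --- I expect to be comparatively routine.
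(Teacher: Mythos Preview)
Your overall architecture---build an $M$-bundle over $S^k$ from exotic disc diffeomorphisms, equip it with fibrewise $T$-invariant positive scalar curvature, and detect the resulting class via $B\mathcal{D}(M,T)$---is exactly the paper's strategy. But the implementation diverges from the paper at each of the three steps, and two of your choices introduce genuine gaps.

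First, the source of exotic diffeomorphisms. You work near a $T$-fixed point $p$, where the local model is $\C^n$ with the standard $T^n$-action; the orbit space there is the corner $\R^n_{\ge 0}$, not $\R^n$, so compactly supported equivariant diffeomorphisms correspond to $\Diff_c(\R^n_{\ge 0})$ as a manifold with corners, and your appeal to ``equivariant smoothing theory'' to identify this rationally with $\Diff_c(\R^n)$ is not justified. The paper instead embeds $D^n$ in the \emph{interior} of the orbit polytope $P$, where the $T$-action on $M$ is free, so $\pi^{-1}(D^n)\cong D^n\times T$ and one gets an honest homomorphism $\Diff(D^n,\partial D^n)\to\mathcal{D}(M,T)$; injectivity on $\pi_k\otimes\Q$ is then proved by showing $\pi_k(\Diff(D^n,\partial D^n))\otimes\Q\to\pi_k(\Diff(P))\otimes\Q$ is injective, using Farrell--Hsiang together with an elementary collar argument.

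Second, producing the fibrewise metric. Your argument that the family index vanishes is correct arithmetic, but vanishing of the index is only a necessary condition; it does not hand you a fibrewise invariant positive scalar curvature metric, and no equivariant parametrised Gromov--Lawson--Chernysh theorem of the strength you invoke is available in the literature. The paper avoids index theory (and the spin hypothesis) entirely: on the Hatcher disc bundle $E\to S^k$ it uses an existing non-equivariant fibrewise psc metric, takes the product with $T$, and then extends over $S^k\times N$ by an explicit equivariant handle decomposition of $(N-\tilde M_1)\times D^2$ (built from an equivariant Morse function with no low--co-index critical orbits) together with an equivariant family surgery lemma; finally a free diagonal $S^1$-quotient \`a la B\'erard Bergery carries the metric down to $F$.

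Third, the comparison with $B\mathcal{D}(M,T)$. Choosing $g_0$ generic only controls the stabiliser at the basepoint; it does not by itself make $\mathcal{M}(M,T)\to B\mathcal{D}(M,T)$ a rational equivalence on $\pi_k$. The paper instead imposes a combinatorial condition on $M$---that the natural map $\aut(\mathcal{P},\lambda)\to\aut(\mathcal{P})$ is trivial---and proves (Lemma~\ref{sec:action-dm-t-3}) that under this hypothesis $\pi_k(\mathcal{M}(M,T))\otimes\Q\cong\pi_k(B\mathcal{D}(M,T))\otimes\Q$ for $k>1$; concrete quasitoric manifolds satisfying the condition are exhibited in Example~\ref{sec:notations}. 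This is the step that justifies passing to the \emph{naive} moduli space rather than an observer-type quotient, and your genericity argument does not replace it.
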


We also show that if a simple combinatorial condition on the orbit polytope of a quasitoric manifold \(M\) is satisfied, then the above theorem holds for \(M\).
We believe that this condition holds for ``most'' quasitoric manifolds.

Note that \(\mathcal{M}^+(M;T)\) is the analogue of the naive moduli space of metrics of positive scalar curvature in the equivariant situation and not the analogue of the observer moduli space for which so far most results have been proven.


The idea of proof for Theorem \ref{sec:introduction} is similar to the ideas used in \cite{MR2680210}:
We first show the following theorem in Section~\ref{sec:action-dm-t-4} which might be of independent interest.

\begin{theorem}
\label{sec:introduction-1}
  Let \(M\) be a quasitoric manifold. 
Then \(\mathcal{R}(M,T)\) is a classifying space for the family of finite subgroups of \(\mathcal{D}(M,T)\).
Moreover, if \(M\) satisfies the above mentioned combinatorial condition, then \(\mathcal{M}(M,T)\) is a rational model for the classifying space \(B\mathcal{D}(M,T)\).
\end{theorem}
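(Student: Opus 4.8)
The plan is to analyze the action of $\mathcal{D}(M,T)$ on $\mathcal{R}(M,T)$ directly, using the convexity of the space of metrics. First I would note that $\mathcal{R}(M,T)$ is a convex subset of the vector space of $T$-invariant symmetric $2$-tensors, hence contractible; moreover it carries an action of $\mathcal{D}(M,T) = N_{\Diff(M)}(T)/T$ by pullback. To show $\mathcal{R}(M,T)$ is a classifying space for the family $\mathcal{F}$ of finite subgroups, by the standard characterization it suffices to show: (i) for every finite subgroup $H \leq \mathcal{D}(M,T)$, the fixed point set $\mathcal{R}(M,T)^H$ is (nonempty and) contractible; and (ii) for every subgroup that is not finite, the fixed point set is empty. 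Contractibility of $\mathcal{R}(M,T)^H$ is immediate once it is nonempty, since it is again convex (the fixed set of a linear-ish action on a convex set). Nonemptiness is an averaging argument: given any metric in $\mathcal{R}(M,T)$, average it over the finite group $H$ (which lifts, after passing to the relevant extension, to a compact group acting on $M$) to produce an $H$-fixed invariant metric. Emptiness for non-finite subgroups requires showing that if $H \leq \mathcal{D}(M,T)$ fixes an invariant metric $g$, then $H$ sits inside the isometry group of $(M,g)$ modulo $T$, which is compact, so if $H$ is in addition discrete — or one argues with closures — one concludes $H$ must be finite; the key input here is that $\mathcal{D}(M,T)$ contains no positive-dimensional compact subgroup acting effectively, a property I expect to extract from the structure of quasitoric manifolds and the fact that $T$ is already a maximal torus of $\Diff(M)$ in the relevant sense.

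For the second statement, that $\mathcal{M}(M,T) = \mathcal{R}(M,T)/\mathcal{D}(M,T)$ is a rational model for $B\mathcal{D}(M,T)$ under the combinatorial hypothesis, I would proceed via the fibration-type comparison
\[
  \mathcal{R}(M,T) \longrightarrow \mathcal{R}(M,T)/\mathcal{D}(M,T) = \mathcal{M}(M,T),
\]
together with the Borel construction $E\mathcal{D}(M,T) \times_{\mathcal{D}(M,T)} \mathcal{R}(M,T) \to B\mathcal{D}(M,T)$. Since $\mathcal{R}(M,T)$ is contractible, the Borel construction is a model for $B\mathcal{D}(M,T)$. The natural map from the Borel construction to the honest quotient $\mathcal{M}(M,T)$ is a rational homotopy equivalence provided all the isotropy groups of the $\mathcal{D}(M,T)$-action on $\mathcal{R}(M,T)$ are finite — by the first part these isotropy groups are exactly the finite subgroups, so each fiber $B(\text{finite group})$ is rationally trivial. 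Thus the combinatorial condition enters only to guarantee that the action has "small" (finite, and suitably controlled) stabilizers in a strong enough sense — presumably that $\mathcal{D}(M,T)$ itself is virtually torsion-free or that its finite subgroups are uniformly bounded — so that the spectral sequence or the Leray–Serre argument comparing $H^*(B\mathcal{D}(M,T);\Q)$ with $H^*(\mathcal{M}(M,T);\Q)$ degenerates appropriately.

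The main obstacle, I expect, is pinning down exactly what role the combinatorial condition on the orbit polytope plays and translating it into a statement about $\mathcal{D}(M,T)$ — namely controlling the group $N_{\Diff(M)}(T)/T$ well enough (e.g. showing its classifying space has finite-type rational cohomology, or that it acts on $\mathcal{R}(M,T)$ with a global bound on the orders of finite stabilizers) so that the quotient-versus-Borel-construction comparison is valid rationally. The averaging argument and the convexity arguments are routine; the subtlety is entirely in the group theory of $\mathcal{D}(M,T)$, which is where the hypotheses on $M/T$ must be invoked, likely by relating $\pi_0(\mathcal{D}(M,T))$ and the identity component to automorphisms of the polytope and its characteristic function. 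I would also need to be careful that the identity component of $\mathcal{D}(M,T)$ is itself rationally trivial or contractible (so that it does not contribute to $B\mathcal{D}(M,T)$ beyond what the discrete part does), which again should follow from $T$ being "maximal" among effective torus actions on a quasitoric manifold.
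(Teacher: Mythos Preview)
Your plan for the first statement is correct and essentially equivalent to the paper's argument: the paper verifies the universal property of $E_{\mathfrak{F}}\mathcal{D}(M,T)$ directly (building an equivariant map from any $\mathfrak{F}$-numerable space via convex combination), while you verify the fixed-point criterion. Both rest on the same two ingredients --- convexity of $\mathcal{R}(M,T)$ and the fact that all isotropy groups of the $\mathcal{D}(M,T)$-action are finite (this is Lemma~\ref{sec:action-dm-t-2}, using that $T$ is a maximal torus in the isometry group of any invariant metric).

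For the second statement your Borel-versus-quotient outline is reasonable in spirit, but you misidentify the role of the combinatorial hypothesis, and your closing expectation is actually false. The isotropy groups are \emph{always} finite, with or without the hypothesis, so the condition is not there to ``make stabilisers small''. More seriously, the identity component of $\mathcal{D}(M,T)$ is \emph{not} rationally trivial or contractible: by Lemma~\ref{sec:action-dm-t} it is homotopy equivalent to the group of face-preserving diffeomorphisms of the polytope, and the entire point of Section~3 is that this group has nonzero rational homotopy (coming from $\Diff(D^n,\partial D^n)$). If the identity component were rationally trivial, $B\mathcal{D}(M,T)$ would have no higher rational homotopy and Theorem~\ref{sec:introduction} would be vacuous. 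So ``$T$ maximal'' gives you only that $\mathcal{D}(M,T)$ contains no positive-dimensional \emph{compact} subgroups, not that its identity component is small.

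What the paper actually does with the combinatorial condition (that $\aut(\mathcal{P},\lambda)\to\aut(\mathcal{P})$ is trivial) is the following: it produces a \emph{splitting} $\psi:\aut(\mathcal{P},\lambda)\to\mathcal{D}(M,T)$ whose image $\mathcal{K}$ acts by explicit involutions on the canonical model. One then shows, using Bredon's results on finite group actions, that $\mathcal{R}(M,T)/(\mathcal{H}\rtimes\mathcal{K})$ has vanishing rational homotopy, and that the finite-index subgroup $\mathcal{G}/\mathcal{H}$ acts \emph{freely} on this intermediate quotient. The freeness step (Lemma~\ref{sec:action-dm-t-3}) is where the condition is genuinely used, and it is what lets the paper replace the orbifold-type comparison you propose by an honest principal bundle argument. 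Your abstract Borel-to-quotient comparison might still go through without the hypothesis, but you would need to supply the local structure (Ebin's slice theorem, metrizability and paracompactness of the quotient) and deal with $\pi_1$ carefully; the paper sidesteps all of that by arranging a free action.
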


The classifying map of an \(M\)-bundle with structure group \(\mathcal{D}(M,T)\), total space \(E\) and paracompact base \(B\) is then given by \(b\mapsto [g|_{E_b}]\), where \(g\) is any \(T\)-invariant Riemannian metric on \(E\) and \(g|_{E_b}\) denotes the restriction of \(g\) to the fiber over \(b\in B\).
The proof of Theorem~\ref{sec:introduction} is then completed by constructing a non-trivial bundle as above with \(B=S^k\), such that there is a metric on \(E\) whose restriction to any fiber has positive scalar curvature.

We refer the reader to \cite{MR1104531}, \cite{MR3363157} and \cite{MR1897064}  as general references on the notions of toric topology.

I want to thank the anonymous referee for detailed comments which helped to improve the presentation of this paper.
I would also like to thank Oliver Goertsches for several comments on an earlier version of this paper.

\section{The action of $\mathcal{D}(M,T)$ on $\mathcal{R}(M,T)$ for $M$ a torus manifold}
\label{sec:action-dm-t-4}

In this section we describe the action of \(\mathcal{D}(M,T)\) on \(\mathcal{R}(M,T)\) where \(M\) is a torus manifold.
We give sufficient criteria for the rational homotopy groups of \(\mathcal{M}(M,T)\) to be isomorphic to the rational homotopy groups of the classifying space of \(\mathcal{D}(M,T)\). 

\begin{lemma}
\label{sec:action-dm-t-2}
  Let \(M\) be a closed manifold.
  If \(T\) is a maximal torus in \(\Diff(M)\), then the isotropy groups of the natural \(\mathcal{D}(M,T)\)-action on \(\mathcal{R}(M,T)\) are finite.
\end{lemma}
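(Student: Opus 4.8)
The plan is to identify the stabilizer of an arbitrary metric $g\in\mathcal{R}(M,T)$ under the $\mathcal{D}(M,T)$-action with the Weyl group of a compact Lie group, which is classically finite.

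First I would check that the action is well defined and compute the stabilizer. If $\varphi\in N_{\Diff(M)}(T)$ and $g\in\mathcal{R}(M,T)$, then for every $t\in T$ one has $t^{*}(\varphi^{*}g)=(\varphi\circ t)^{*}g=\varphi^{*}\bigl((\varphi t\varphi^{-1})^{*}g\bigr)=\varphi^{*}g$, since $\varphi t\varphi^{-1}\in T$ and $g$ is $T$-invariant; hence $\varphi^{*}g\in\mathcal{R}(M,T)$. Moreover $T$ acts trivially on $\mathcal{R}(M,T)$ because its elements are already isometries of every $g\in\mathcal{R}(M,T)$, so the action descends to $\mathcal{D}(M,T)=N_{\Diff(M)}(T)/T$. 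It follows that the stabilizer of $g$ is
\[
  \mathcal{D}(M,T)_{g}=\bigl(N_{\Diff(M)}(T)\cap\operatorname{Isom}(M,g)\bigr)/T .
\]
Here $T\subseteq\operatorname{Isom}(M,g)$ because $g$ is $T$-invariant, and an isometry of $(M,g)$ normalizing $T$ is precisely an element of $N_{\operatorname{Isom}(M,g)}(T)$; writing $G:=\operatorname{Isom}(M,g)$ we therefore get $\mathcal{D}(M,T)_{g}=N_{G}(T)/T$.

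Next I would invoke the theorem of Myers and Steenrod: since $M$ is closed, $G=\operatorname{Isom}(M,g)$ is a compact Lie group, and $T$ is a torus subgroup of it. I claim $T$ is in fact a maximal torus of $G$. Indeed, if $T$ were properly contained in a torus $S\subseteq G$, then $S$ would be a compact connected abelian subgroup of $\Diff(M)$ strictly containing $T$, contradicting the hypothesis that $T$ is a maximal torus of $\Diff(M)$ (i.e. maximal among compact connected abelian subgroups). Consequently $N_{G^{0}}(T)/T$ is the Weyl group $W(G^{0})$ of the compact connected Lie group $G^{0}$, which is finite; and since $G/G^{0}$ is finite, so is $N_{G}(T)/T$. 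Hence every isotropy group $\mathcal{D}(M,T)_{g}$ is finite, as claimed.

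The only non-formal ingredient is the Myers–Steenrod theorem, which is exactly what converts the a priori infinite-dimensional stabilizer sitting inside $\Diff(M)$ into the normalizer of a torus in a compact Lie group; after that, finiteness is the classical statement that Weyl groups of compact Lie groups are finite. The one point requiring a little care is the bookkeeping identifying $\mathcal{D}(M,T)_{g}$ with $N_{G}(T)/T$, together with the transfer of maximality from $\Diff(M)$ to $G$, for which one reads ``maximal torus of $\Diff(M)$'' as ``not properly contained in any compact connected abelian subgroup of $\Diff(M)$''.
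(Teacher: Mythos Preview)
Your proof is correct and follows essentially the same route as the paper: identify the stabilizer of $g$ with $N_{\operatorname{Isom}(M,g)}(T)/T$, invoke compactness of the isometry group of a closed Riemannian manifold, and use that $T$ is a maximal torus there so the quotient is a Weyl group, hence finite. Your write-up is in fact slightly more careful than the paper's (you check well-definedness of the action and handle the possibility that $\operatorname{Isom}(M,g)$ is disconnected), but the argument is the same.
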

\begin{proof}
  The isotropy group of the \(\mathcal{D}(M,T)\)-action of an element \(g\in\mathcal{R}(M,T)\) is the normalizer \(W\)  of the torus \(T\) in the isometry group \(K\) of \(g\) modulo \(T\).
  Since \(M\) is compact \(K\) is a compact Lie group.
  Moreover, because \(T\) is a maximal torus of \(K\), \(W\) is the Weyl group of \(K\) which is a finite group.
  Therefore the statement follows.
\end{proof}

For each torus manifold \(M\) there is a natural stratification of the orbit space \(M/T\) by the identity components of the isotropy groups of the orbits.
That is, the open strata of \(M/T\) are given by the connected components of
 \[(M/T)_{H}=\{Tx\in M/T;\; (T_x)^0=H\}\]
for connected closed subgroups \(H\) of \(T\).
We call the closure of an open stratum a closed stratum.
The closed strata are naturally ordered by inclusion.
We denote by \(\mathcal{P}\) the poset of closed strata of \(M/T\).

There is a  natural map 
\[\lambda:\mathcal{P}\rightarrow \{\text{closed connected subgroups of } T\}\]
such that \(\lambda(F)=H\) if \(F\) is the closure of a component of \((M/T)_H\).

We sometimes also write \(\lambda(Tx)\) or \(\lambda(x)\) for \(\lambda(F_{Tx})\), where \(x\in M\), \(Tx\in M/T\) and \(F_{Tx}\) is the minimal stratum containing \(Tx\).
Note that by this definition for \(x\in M\), \(\lambda(x)=(T_x)^0\subset T\) is the identity component of the isotropy group of \(x\).
If \(M_1\subset M\) is the preimage of a closed stratum \(F\subset M/T\) under the orbit map, then we define \(\lambda(M_1)=\lambda(F)\).

We call \((\mathcal{P},\lambda)\) the characteristic pair of \(M\).

An automorphism of \((\mathcal{P},\lambda)\) is a pair \((f,g)\) such that \(f\) is an automorphism of the poset \(\mathcal{P}\) and \(g\) is an automorphism of the torus \(T\) so that \(\lambda(f(x))=g(\lambda(x))\) for all \(x\in \mathcal{P}\).
The automorphisms of \((\mathcal{P},\lambda)\) naturally form a group \(\aut(\mathcal{P},\lambda)\).

There is a natural action of \(\mathcal{D}(M,T)\) on \(M/T\) which preserves the above stratification.
This action is given by
\begin{equation*}
  \varphi\cdot (Tx)= T\tilde{\varphi}(x),
\end{equation*}
for an orbit \(Tx\) in \(M\) and an element \(\varphi\in \mathcal{D}(M,T)\) which lifts to \(\tilde{\varphi}\in N_{\Diff(M)}T\).
Moreover, we have \(\lambda(\varphi(Tx))=\tilde{\varphi} \lambda(Tx) \tilde{\varphi}^{-1}\). 
Therefore \(\mathcal{D}(M,T)\) acts by automorphisms on the characteristic pair \((\mathcal{P},\lambda)\).

\begin{lemma}
\label{sec:action-dm-t-1}
  Let \(M\) be a torus manifold. Then there is a finite index subgroup \(\mathcal{G}\) of \(\mathcal{D}(M,T)\) which acts freely on \(\mathcal{R}(M,T)\).
  To be more precise, \(\mathcal{G}\) is the kernel of the natural homomorphism \(\mathcal{D}(M,T)\rightarrow \aut(\mathcal{P}, \lambda)\subset \aut(\mathcal{P})\times \aut(T)\), where \((\mathcal{P}, \lambda)\) is the characteristic pair associated to \(M\).
\end{lemma}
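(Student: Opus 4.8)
The plan is to take for \(\mathcal{G}\) the kernel of the homomorphism \(\mathcal{D}(M,T)\to\aut(\mathcal{P},\lambda)\) coming from the action on the characteristic pair recalled above, and then to prove two things: that \(\aut(\mathcal{P},\lambda)\) is finite, which gives the finite-index claim, and that \(\mathcal{G}\) acts freely on \(\mathcal{R}(M,T)\).

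For the finiteness of \(\aut(\mathcal{P},\lambda)\): the poset \(\mathcal{P}\) of closed strata of the compact space \(M/T\) is finite, so \(\aut(\mathcal{P})\) is finite, and I would only need to bound the image of \(\aut(\mathcal{P},\lambda)\) in \(\aut(T)\). The input I would use is that the isotropy representation of \(T\) on \(T_pM\) at a \(T\)-fixed point \(p\) is faithful---if \(t\in T\) acts trivially on \(T_pM\), then by the slice theorem it acts trivially near \(p\), hence on all of \(M\) by connectedness, hence \(t=e\) by effectiveness---so, being an \(n\)-torus acting faithfully on \(\mathbb{R}^{2n}\), its weights \(\chi_1,\dots,\chi_n\) form a \(\mathbb{Z}\)-basis of the character lattice. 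Then the \(n\) characteristic circles through \(p\), namely the values \(\lambda(F_j)\) of the \(n\) facets \(F_j\in\mathcal{P}\) that meet at \(Tp\), are the corresponding coordinate circles of \(T\), so they generate \(T\). Since any \((f,g)\in\aut(\mathcal{P},\lambda)\) has \(g\) permuting the finite set \(\lambda(\mathcal{P})\), it sends each of these circles to one of finitely many circles by one of two orientations, and as they generate \(T\) this finite amount of data determines \(g\); hence \(\aut(\mathcal{P},\lambda)\) is finite and \([\mathcal{D}(M,T):\mathcal{G}]<\infty\).

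To prove \(\mathcal{G}\) acts freely, I would take \(\varphi\in\mathcal{G}\) with \(\varphi^*g=g\) for some \(g\in\mathcal{R}(M,T)\) and lift \(\varphi\) to \(\tilde\varphi\in N_{\Diff(M)}(T)\), which is then an isometry of \((M,g)\). The assumption that \(\varphi\) lies in the kernel unpacks into two statements: its \(\aut(T)\)-component, conjugation by \(\tilde\varphi\), is trivial, so \(\tilde\varphi\) centralizes \(T\) in \(\Diff(M)\); and its \(\aut(\mathcal{P})\)-component is trivial, so \(\tilde\varphi\) carries every closed stratum of \(M/T\) onto itself, and in particular---since the zero-dimensional strata are single \(T\)-orbits each consisting of one point---\(\tilde\varphi\) fixes every \(T\)-fixed point \(p\). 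At such a \(p\) I would examine \(d_p\tilde\varphi\colon T_pM\to T_pM\): it is a linear isometry commuting with the isotropy representation, so by Schur's lemma---the weight spaces \(\mathbb{C}_{\chi_j}\) being pairwise non-isomorphic irreducible real subrepresentations---it is block-diagonal and acts on \(\mathbb{C}_{\chi_j}\) by a unit complex scalar \(z_j\). Because the \(\chi_j\) form a \(\mathbb{Z}\)-basis, the isotropy representation \(T\to\prod_j U(\mathbb{C}_{\chi_j})\) is onto, so there is \(t_0\in T\) with \(d_p t_0=d_p\tilde\varphi\); then \(t_0^{-1}\tilde\varphi\) is an isometry of the connected manifold \((M,g)\) fixing \(p\) with identity differential there, hence equals \(\id_M\). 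Therefore \(\tilde\varphi=t_0\in T\) and \(\varphi\) is trivial in \(\mathcal{D}(M,T)\).

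The routine ingredients are the finiteness of the stratification of a compact torus manifold and the classical fact that an isometry of a connected Riemannian manifold fixing a point with trivial differential there is the identity. I expect the main things to set up carefully to be the two consequences drawn from the kernel condition---that \(\tilde\varphi\) centralizes \(T\) and that it fixes every \(T\)-fixed point---and the integral-basis property of the weights at a fixed point, equivalently that the characteristic circles generate \(T\), which is used in both halves; granting that, Schur's lemma and isometry rigidity close the argument.
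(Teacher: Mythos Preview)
Your proof is correct and follows essentially the same strategy as the paper: show \(\aut(\mathcal{P},\lambda)\) is finite using that the characteristic circles at a fixed point generate \(T\), and show freeness by examining the differential of an isometry at a \(T\)-fixed point. The only cosmetic difference is that for the freeness step the paper phrases the linear-algebra conclusion as ``\(T\) is a maximal torus of \(O(2n)\), hence equals its own centralizer there,'' whereas you unpack this via Schur's lemma and the surjectivity of \(T\to\prod_j U(\mathbb{C}_{\chi_j})\); these are the same argument.
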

\begin{proof}
  At first we show that \(\aut(\mathcal{P},\lambda)\) is a finite group. To see this note that \(\aut(\mathcal{P})\) is finite because \(\mathcal{P}\) is finite.
Moreover, the natural map \(\aut(\mathcal{P},\lambda)\rightarrow \aut(\mathcal{P})\) has finite kernel, because if \(x \in M^T\), then, by the effectiveness of the action, the \(T\)-representation \(T_xM\) is up to automorphisms of \(T\) the standard representation. Therefore, by the slice theorem, \(Tx\) is contained in exactly \(n\) strata \(F_1,\dots,F_n\) of codimension one such that \(\lambda(F_1),\dots,\lambda(F_n)\) generate \(T\) and each \(\lambda(F_i)\) is isomorphic to the circle group.
These \(\lambda(F_i)\) are preserved by the action of the kernel of \(\aut(\mathcal{P},\lambda)\rightarrow \aut(\mathcal{P})\).
Hence, this kernel is isomorphic to a subgroup of \(\prod_{i=1}^n\aut(\lambda(F_i))=(\mathbb{Z}/2\mathbb{Z})^n\).

  Let \(\mathcal{G}\) be the kernel of the natural map \(\mathcal{D}(M,T)\rightarrow \aut(\mathcal{P},\lambda)\).
  Then \(\mathcal{G}\) has finite index since \(\aut(\mathcal{P},\lambda)\) is a finite group.

  Let \(T\subset H\subset \tilde{\mathcal{G}}\) be a compact Lie group which fixes some metric \(g\in \mathcal{R}(M)\), where \(\tilde{\mathcal{G}}\) is the preimage of \(\mathcal{G}\) in \(N_{\Diff(M)}(T)\).
  Then each element of \(H\) commutes with \(T\) and  fixes every \(x\in M^T\), because the \(T\)-fixed points are isolated by dimension reasons.
  Hence, the differential of the \(H\)-action on \(T_xM\) gives an injective homomorphism \(H\rightarrow O(2n)\).
  Since \(T\) is identified with a maximal torus of \(O(2n)\) under this map, it follows that the centralizer of \(T\) in \(O(2n)\) is \(T\) itself.
  Hence it follows that \(H=T\).
  Therefore \(\mathcal{G}=\tilde{\mathcal{G}}/T\) acts freely on \(\mathcal{R}(M,T)\).
\end{proof}

Now let \(M\) be a quasitoric manifold.
Recall that by locally standardness of the action the orbit space \(M/T\) is a smooth manifold with corners, which we require to be diffeomorphic to a simple convex polytope \(P\).
 We denote by \(\pi:M\rightarrow P\) the orbit map.

Similarly to the automorphism group of the characteristic pair \((\mathcal{P},\lambda)\) we define the group \(\Diff(M/T,\lambda)\subset \Diff(M/T)\times \aut(T)\) of those pairs \((f,g)\in \Diff(M/T)\times \aut(T)\) with \(\lambda(f(x))=g(\lambda(x))\) for all \(x\in M/T\).
Here \(\Diff(M/T)\cong \Diff(P)\) denotes the group of all diffeomorphisms of \(M/T\cong P\). Here diffeomorphisms of \(M/T\) are to be understood in the sense of smooth manifolds with corners.
Then we have the following lemma.

\begin{lemma}
\label{sec:action-dm-t}
  For \(M\) a quasitoric manifold, the group \(\mathcal{D}(M,T)\) is naturally isomorphic to  \((C^{\infty}(M/T,T)/T)\rtimes \Diff(M/T,\lambda)\) as topological groups.

In particular the group \(\mathcal{G}\) of Lemma \ref{sec:action-dm-t-1}  is homotopy equivalent to the subgroup of all those diffeomorphisms of \(M/T\) which have the property to map each face of \(M/T\) to itself.
\end{lemma}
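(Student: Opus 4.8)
The plan is to pin down $\mathcal{D}(M,T)$ by means of the standard reconstruction of a quasitoric manifold from its orbit data together with an explicit splitting. Recall (see the references on toric topology) that $M$ is equivariantly diffeomorphic to the canonical smooth model $(P\times T)/{\sim}$, where $P\cong M/T$ and, for $p$ in the relative interior of a face $F$, one sets $(p,t)\sim(p,t')$ whenever $t^{-1}t'\in\lambda(F)$; write $[p,t]$ for the class of $(p,t)$ and $\pi\colon M\to P$, $[p,t]\mapsto p$. First I would record the continuous homomorphism $\rho\colon\mathcal{D}(M,T)\to\Diff(M/T,\lambda)$ sending the class of $\varphi\in N_{\Diff(M)}(T)$ to the pair $(\bar\varphi,g_\varphi)$, where $\bar\varphi\in\Diff(M/T)$ is induced by $\varphi$ and $g_\varphi\in\aut(T)$ is conjugation by $\varphi$; that $\bar\varphi$ is a diffeomorphism of the manifold with corners $M/T$ and that $\lambda(\bar\varphi(x))=g_\varphi(\lambda(x))$ were already noted before Lemma~\ref{sec:action-dm-t-1}. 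Next I would build a continuous section $s$ of $\rho$ by $s(f,g)\colon[p,t]\mapsto[f(p),g(t)]$. This is well defined on the model: a diffeomorphism of a manifold with corners preserves the stratification, so $f$ maps the relative interior of $F$ onto that of $f(F)$, and the compatibility condition gives $g(\lambda(F))=\lambda(f(F))$, whence $g$ carries the identification over $p$ to the one over $f(p)$; moreover $s(f,g)$ is a diffeomorphism normalizing $T$ with conjugation $g$, $s$ is a homomorphism, and $\rho\circ s=\operatorname{id}$.

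Then I would identify $N:=\kernel\rho$ with $C^\infty(M/T,T)/T$. A class in $N$ lifts to a $T$-equivariant diffeomorphism $\varphi$ of $M$ that commutes with $T$ and covers $\operatorname{id}_{M/T}$; over the free part $\pi^{-1}(\interior P)\cong\interior P\times T$ it has the form $(p,t)\mapsto(p,c(p)t)$ for a unique smooth $c\colon\interior P\to T$, and the content of this step is that $c$ extends to a smooth map $P\to T$, while conversely every smooth $c\colon P\to T$ yields an element $\gamma_c\in N$, $\gamma_c[p,t]=[p,c(p)t]$ (well defined because $T$ is abelian). Since $\gamma_c\gamma_{c'}=\gamma_{cc'}$ and the constant maps are exactly the image of $T$, one gets an isomorphism of topological groups $N\cong C^\infty(M/T,T)/T$; a direct computation gives $s(f,g)\,\gamma_c\,s(f,g)^{-1}=\gamma_{g\circ c\circ f^{-1}}$. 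Hence $\mathcal{D}(M,T)=N\rtimes s\big(\Diff(M/T,\lambda)\big)$ is, as a topological group, the asserted semidirect product, with $\Diff(M/T,\lambda)$ acting on $C^\infty(M/T,T)/T$ by $(f,g)\cdot\bar c=\overline{g\circ c\circ f^{-1}}$.

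For the second assertion, note that under this isomorphism the homomorphism $\mathcal{D}(M,T)\to\aut(\mathcal{P},\lambda)\subset\aut(\mathcal{P})\times\aut(T)$ of Lemma~\ref{sec:action-dm-t-1} becomes $(\bar c,(f,g))\mapsto(f_*,g)$, where $f_*$ is the permutation of closed strata induced by $f$; so $\mathcal{G}$ corresponds to the set of triples with $g=\operatorname{id}_T$ and $f$ fixing every face of $M/T$. If $f$ fixes every face it preserves the relative interior of each, so $\lambda(f(x))=\lambda(x)$ for all $x$ and $(f,\operatorname{id}_T)\in\Diff(M/T,\lambda)$ automatically; therefore $\mathcal{G}\cong\big(C^\infty(M/T,T)/T\big)\rtimes\mathcal{K}$, where $\mathcal{K}$ is the group of diffeomorphisms of $M/T$ mapping each face to itself. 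Finally $C^\infty(M/T,T)/T$ is contractible: fixing a vertex $v_0$ of $P\cong M/T$, the affine contractions $p\mapsto(1-s)v_0+sp$ of $P$ induce a deformation retraction $H_s(c)(p)=c\big((1-s)v_0+sp\big)$ of $C^\infty(M/T,T)$ onto the constant maps, which descends to the quotient. Thus the projection $\mathcal{G}\to\mathcal{K}$ is a split surjection of topological groups with contractible kernel, hence a homotopy equivalence, which is the claim.

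The step I expect to be the real obstacle is the identification of $\kernel\rho$ with the gauge group, and inside it the smoothness of the parameter $c$ along $\partial P$: away from the boundary this is immediate, while near a point $x$ with isotropy group $H$ one has, by the slice theorem, $\pi^{-1}(U)\cong T\times_H V$ for an $H$-representation $V$, and one must show that a $T$-equivariant self-diffeomorphism of such a slice covering the identity on the base is multiplication by a smooth ($H$-equivariant) $T$-valued function agreeing with $c$ over the free locus. Establishing the reconstruction $M\cong(P\times T)/{\sim}$ in the smooth category is also used, but that is standard in toric topology.
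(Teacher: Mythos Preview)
Your outline matches the paper's proof in structure and in content: the same homomorphism to $\Diff(M/T,\lambda)$, the same section via the canonical model, the same identification of the kernel with $C^\infty(M/T,T)/T$, and the same contractibility argument for the kernel to deduce the statement about $\mathcal{G}$.

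The one place where you defer work---smoothness of $c$ along $\partial P$---is exactly where the paper spends its effort, and it is worth knowing the concrete mechanism. Rather than the slice picture $T\times_H V$, the paper works in a standard chart $U\cong\C^n$ and writes the orbit-preserving equivariant diffeomorphism as $\tilde F(z_1,\dots,z_n)=(z_1f_1(z),\dots,z_nf_n(z))$ with $f_k$ depending only on $(|z_1|^2,\dots,|z_n|^2)$. Smoothness of $f_k$ at $z_k=0$ is then obtained by the identity
\[
z_kf_k(z)=\int_0^1\big(D_{z_k}\tilde F_k(z_1,\dots,z_{k-1},|z_k|t,z_{k+1},\dots,z_n)\big)(1)\,dt,
\]
which follows from the fundamental theorem of calculus together with the observation that $T$-equivariance forces $t\mapsto\tilde F_k(\dots,t,\dots)$ (for $t\in\R$) to be odd, hence its derivative to be even; the integrand is therefore smooth in $z$. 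This is the actual content behind your slice-theorem sketch. One further caution: the claim that $s(f,g)\colon[p,t]\mapsto[f(p),g(t)]$ is a \emph{diffeomorphism} of $M$ (not merely a homeomorphism) is not automatic and depends on the choice of smooth section to the orbit map defining the model; the paper cites \cite[Lemma~2.3]{MR3080806} for this, so you should not treat it as entirely routine.
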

\begin{proof}
  First we show that the kernel of the natural map \(\varphi:\mathcal{D}(M,T)\rightarrow \Diff(M/T,\lambda)\) is isomorphic to \(C^{\infty}(M/T,T)/T\).
  Since \(T\) is abelian, there is a natural map from \(C^{\infty}(M/T,T)/T\) to the kernel of \(\varphi\) which is induced by the map \(C^{\infty}(M/T,T)\rightarrow N_{\Diff(M)}(T)\) with \(f\mapsto F\), where \(F(x)=f(Tx)x\) for \(x\in M\).

We show that this map is a homeomorphism.
To do so, let \(\tilde{F}\in N_{\Diff(M)}(T)\), such that \(F=[\tilde{F}]\in \ker\varphi\subset \mathcal{D}(M,T)\).
Then \(\tilde{F}\) maps each orbit in \(M\) to itself.
Since \(M\) is quasitoric, there is a covering of \(M\) by open invariant subsets \(U_1,\dots,U_k\) which are weakly equivariantly diffeomorphic to \(\C^n\) with the standard \(T\)-action.
Here the standard action of \(T=(S^1)^n\) on \(\C^n\) is given by componentwise multiplication.

Because \(\tilde{F}\) maps each \(T\)-orbit to itself, the restriction of \(\tilde{F}\) to \(U_j\cong \C^n\) is of the form
\begin{align*}
  \tilde{F}(z_1,\dots,z_n)&=(f_1(z_1,\dots,z_n),\dots,f_n(z_1,\dots,z_n))\cdot (z_1,\dots,z_n)\\ &=(z_1f_1(z_1,\dots,z_n),\dots,z_nf_n(z_1,\dots,z_n)),
\end{align*}
where \((z_1,\dots,z_n)\in \C^n\) and \(f_k(z_1,\dots,z_n)\in S^1\) for \(k=1,\dots,n\).
Because \(\tilde{F}\) is also \(T\)-equivariant, for each \(k\), \(f_k(z_1,\dots,z_n)\)
 depends only on the orbit of \((z_1,\dots,z_n)\), i.e. on \((|z_1|^2,\dots,|z_n|^2)\).

We have to show that \(f_k\) is smooth for all \(k\).

Smoothness in points with \(z_k\neq 0\) follows from the smoothness of \(\tilde{F}\).
We show that \(f_k\) is also smooth in points with \(z_k=0\).

Since \(\tilde{F}\) is smooth, by the fundamental theorem of calculus, we have for \((z_1,\dots,z_n)\in \C^n\),
\begin{equation*}
  z_kf_k(z_1,\dots,z_n)=\tilde{F}_k(z_1,\dots,z_n)=\int_0^1(D_{z_k}\tilde{F}_k(z_1,\dots,z_{k-1},z_kt,z_{k+1},\dots,z_n))(z_k) \; dt,
\end{equation*}
where
\begin{equation*}
  (D_{z_k}\tilde{F}_k(z_1,\dots,z_n))(z)=\left(\frac{\partial \tilde{F}_k}{\partial x_k}(z_1,\dots,z_n),\frac{\partial \tilde{F}_k}{\partial y_k}(z_1,\dots,z_n)\right)(x,y)^t
\end{equation*}
with \(z_l=x_l+iy_l\) for \(l=1,\dots,n\) and \(z=x+iy\), \(x_l,x,y_l,y\in \R\).

Now we have:
\begin{align*}
  z_kf_k(z_1,\dots,z_n)&=\int_0^1 (D_{z_k}\tilde{F}_k(z_1,\dots,z_{k-1},z_kt,z_{k+1},\dots,z_n))(z_k) \; dt\\
&=\int_0^1z_k (D_{z_k}\tilde{F}_k(z_1,\dots,z_{k-1},\frac{|z_k|}{z_k} z_kt,z_{k+1},\dots,z_n))(1) \; dt\\
&=z_k\int_0^1 (D_{z_k}\tilde{F}_k(z_1,\dots,z_{k-1},|z_k|t,z_{k+1},\dots,z_n))(1) \; dt.
\end{align*}
Here we have used in the second equality that \(\tilde{F}\) is \(T\)-equivariant.

Since \(\tilde{F}\) is \(T\)-equivariant, it follows that 
\begin{equation*}
  t\mapsto \tilde{F}_k(z_1,\dots,z_{k-1},t,z_{k+1},\dots,z_n),\;\; t\in \R
\end{equation*}
is an odd function.
Hence, its \(t\)-derivative
\[t\mapsto
(D_{z_k}F_k(z_1,\dots,z_{k-1},t,z_{k+1},\dots,z_n))(1)\] is an even function.
Therefore the integrand in the last integral depends smoothly on \((z_1,\dots,z_n)\) and \(f_k\) is smooth everywhere.
Because \(f_k\) is \(T\)-invariant, it induces a smooth map on the orbit space, whose derivatives depend continuously on the derivatives of \(\tilde{F}\).

  Hence it is sufficient to show that there is a section to \(\varphi\).

  There is a model \(M\cong ((M/T)\times T)/\sim\), where \((x,t)\sim(x',t')\) if and only if \(x=x'\) and \(t't^{-1}\in \lambda(x)\), which is called canonical model in the literature.
But note that it is not canonical in the sense that it does not depend on any choices.
Actually it depends on a choice of a section to the orbit map. 
  Therefore every \((f,g)\in\Diff(M/T,\lambda)\) of \(M/T\) lifts to a homeomorphism of \(M\) given by \(f\times g\). 
  One can show (see \cite[Lemma 2.3]{MR3080806}), that this homeomorphism is actually a diffeomorphism (for the right choice of the section to the orbit map).
  Therefore we have a section of \(\varphi\) and the first statement follows.

  The second statement follows because \(\mathcal{H}=C^{\infty}(M/T,T)/T\) is contractible because \(M/T\) is contractible.
\end{proof}

By Lemma \ref{sec:action-dm-t}, \(\mathcal{G}/\mathcal{H}\) can be identified with a subgroup of the group of \(T\)-equivariant diffeomorphisms of \(M\).
We fix this identification for the rest of this paper.

\begin{lemma}
\label{sec:action-dm-t-3}
  If in the situation of Lemma~\ref{sec:action-dm-t-1}, \(M\) is quasitoric and the natural homomorphism \(\aut(\mathcal{P},\lambda)\rightarrow \aut(\mathcal{P})\) is trivial, then \(\pi_k(\mathcal{M}(M,T))\otimes \Q \cong \pi_k(B\mathcal{D}(M,T))\otimes \Q\) for \(k>1\).
\end{lemma}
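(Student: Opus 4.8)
The plan is to combine the preceding structural lemmas with standard rational-homotopy-theoretic bookkeeping for Borel constructions. Write $\mathcal{D} = \mathcal{D}(M,T)$, $\mathcal{R} = \mathcal{R}(M,T)$, $\mathcal{G}$ for the finite-index subgroup of Lemma~\ref{sec:action-dm-t-1}, and $Q = \mathcal{D}/\mathcal{G}$, which embeds in $\aut(\mathcal{P},\lambda)$ and is therefore finite; under the hypothesis that $\aut(\mathcal{P},\lambda)\rightarrow\aut(\mathcal{P})$ is trivial, $Q$ is moreover a subgroup of $(\mathbb{Z}/2)^n$, hence a finite $2$-group — but all we need is finiteness. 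First I would record that, since $\mathcal{R}$ is convex it is contractible, and since $\mathcal{G}$ acts freely on $\mathcal{R}$ (Lemma~\ref{sec:action-dm-t-1}), the quotient $\mathcal{R}/\mathcal{G}$ is a model for $B\mathcal{G}$ and, more to the point, $\mathcal{R}$ with its $\mathcal{D}$-action is a model for the classifying space of the family of finite subgroups of $\mathcal{D}$ (this is essentially the first assertion of Theorem~\ref{sec:introduction-1}, whose proof we may assume).

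Next I would identify $\mathcal{M}(M,T) = \mathcal{R}/\mathcal{D} = (\mathcal{R}/\mathcal{G})/Q$ with the orbit space of the residual finite group $Q$ acting on $\mathcal{R}/\mathcal{G}\simeq B\mathcal{G}$. The key point is then that rationally one does not see the action of a finite group on homotopy groups of the quotient only through the invariants at the level of a nice cover. Concretely, the homotopy quotient $(\mathcal{R}/\mathcal{G})_{hQ} = \mathcal{R}/\mathcal{G}\times_Q EQ$ maps to the strict quotient $\mathcal{R}/\mathcal{D}$, and since $Q$ is finite this map is a rational homotopy equivalence: its homotopy fibres are classifying spaces of the (finite) stabilizers, which are rationally trivial, so by the long exact sequence $\pi_k(\mathcal{M}(M,T))\otimes\Q \cong \pi_k((\mathcal{R}/\mathcal{G})_{hQ})\otimes\Q$ for all $k$. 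On the other hand $(\mathcal{R}/\mathcal{G})_{hQ}$ is exactly $B\tilde{\mathcal{D}}$ where $\tilde{\mathcal{D}}$ is built from the extension $1\to\mathcal{G}\to\mathcal{D}\to Q\to 1$; more precisely, since $\mathcal{R}$ is a model for $\underline{E}\mathcal{D}$ (the classifying space for the family of finite subgroups), $\mathcal{R}\times_{\mathcal{D}}E\mathcal{D}\simeq B\mathcal{D}$ and the same fibration-over-finite-stabilizers argument shows $\mathcal{R}\times_{\mathcal{D}}E\mathcal{D}\to\mathcal{R}/\mathcal{D}=\mathcal{M}(M,T)$ is a rational homotopy equivalence. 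For $k>1$ the relevant homotopy groups are abelian and the $\pi_1$-action on them, being an action of a (virtually) finite-by-$\mathcal{G}$ group that factors appropriately, does not obstruct the comparison, which is why the statement is restricted to $k>1$.

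Then I would assemble: $\pi_k(\mathcal{M}(M,T))\otimes\Q \cong \pi_k(\mathcal{R}\times_{\mathcal{D}}E\mathcal{D})\otimes\Q \cong \pi_k(B\mathcal{D}(M,T))\otimes\Q$ for $k>1$, the first isomorphism being the rational equivalence above and the second being the definition of $B\mathcal{D}(M,T)$ together with the fact that $\mathcal{R}\times_{\mathcal{D}}E\mathcal{D}$ is a model for it up to the finite stabilizers. It may be cleaner to phrase the whole argument via Lemma~\ref{sec:action-dm-t}: there $\mathcal{D}\cong\mathcal{H}\rtimes\Diff(M/T,\lambda)$ with $\mathcal{H}=C^\infty(M/T,T)/T$ contractible, so $B\mathcal{D}\simeq B\Diff(M/T,\lambda)$ and $\mathcal{G}\simeq\Diff_{\mathcal F}(M/T)$, the diffeomorphisms preserving each face; then $\Diff(M/T,\lambda)/\Diff_{\mathcal F}(M/T)$ is a finite group (the image of $\aut(\mathcal{P},\lambda)$, which by hypothesis lies in the pointwise stabilizer of $\mathcal{P}$ and hence in the automorphisms of the $\lambda(F_i)$, giving a subgroup of $(\mathbb{Z}/2)^n$), and one runs the same finite-quotient rationalization.

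\medskip

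The main obstacle I expect is the rational-equivalence step itself: one must argue carefully that passing from a finite-group \emph{strict} quotient to the corresponding \emph{homotopy} quotient is a rational homotopy equivalence \emph{on the relevant path components and in degrees $k>1$}, accounting for non-trivial $\pi_1$-actions and for the fact that $\mathcal{R}/\mathcal{D}$ need not be simply connected. The honest way to handle this is a Serre spectral sequence (or Leray spectral sequence for the quotient map) with $\Q$ coefficients for $\mathcal{R}\times_{\mathcal D}E\mathcal D\to\mathcal{R}/\mathcal D$, whose fibres have the rational homology of a point because finite groups have trivial rational homology; this forces an isomorphism on rational homology, and then — restricting to a component and using that both spaces have the same (in general non-abelian, but here well-controlled) fundamental group up to finite index — one upgrades to rational homotopy groups in degrees $k>1$ via a rational Whitehead/Dror-type argument. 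Getting the $\pi_1$-bookkeeping exactly right, rather than the homological statement, is the only genuinely delicate point; everything else is formal given Lemmas~\ref{sec:action-dm-t-1} and~\ref{sec:action-dm-t} and the first half of Theorem~\ref{sec:introduction-1}.
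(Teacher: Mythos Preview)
Your outline and the paper's proof diverge in a way that matters, and the gap you yourself flag is real.

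\textbf{Where the approaches differ.} You quotient by $\mathcal{G}$ first (obtaining $B\mathcal{G}$) and then try to pass through the residual finite quotient $Q=\mathcal{D}/\mathcal{G}$ by comparing the strict quotient with the homotopy quotient. The paper reverses the order. Using the hypothesis that $\aut(\mathcal{P},\lambda)\to\aut(\mathcal{P})$ is trivial, it lifts $\aut(\mathcal{P},\lambda)$ to a subgroup $\mathcal{K}\subset\mathcal{D}(M,T)$ acting on the canonical model by automorphisms of the $T$--factor only; this $\mathcal{K}$ is realised by a compact Lie group extension of $T$, hence fixes some invariant metric, and it \emph{commutes} with $\mathcal{G}/\mathcal{H}$. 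One then quotients $\mathcal{R}(M,T)/\mathcal{H}$ by the finite group $\mathcal{K}$ first: Bredon's results on finite group actions with a fixed point show the result is simply connected with vanishing rational homology, hence rationally contractible. The remaining action of $\mathcal{G}/\mathcal{H}$ on this rationally contractible space is shown to be \emph{free} (this is the last, hands-on step of the paper's proof, again using that elements of $\mathcal{K}$ act trivially on $M/T$). One therefore gets an honest principal bundle and reads off $\pi_k(\mathcal{M})\otimes\Q\cong\pi_{k-1}(\mathcal{G}/\mathcal{H})\otimes\Q\cong\pi_k(B\mathcal{D})\otimes\Q$ from the long exact sequence, with no $\pi_1$--gymnastics at all.

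\textbf{Why your route is incomplete.} Your key step is to assert that $E\mathcal{D}\times_{\mathcal{D}}\mathcal{R}\to\mathcal{R}/\mathcal{D}$ is a rational equivalence. What one gets cheaply (Leray/Vietoris--Begle with finite stabilisers) is a rational \emph{homology} isomorphism. Upgrading this to a statement about $\pi_k\otimes\Q$ for $k>1$ requires control of $\pi_1$ and its action on higher homotopy: neither side is simply connected or evidently nilpotent, and a rational homology equivalence between non-nilpotent spaces need not induce isomorphisms on rationalised higher homotopy groups. You acknowledge this as ``the only genuinely delicate point'' but do not resolve it; the phrase ``does not obstruct the comparison'' is precisely where a proof is needed. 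In particular, your remark that ``all we need is finiteness'' of $Q$ should worry you: $\aut(\mathcal{P},\lambda)$ is \emph{always} finite, so if finiteness truly sufficed the hypothesis would be vacuous. In the paper the hypothesis is used substantively---to build the splitting $\mathcal{K}$, to make $\mathcal{K}$ commute with $\mathcal{G}/\mathcal{H}$, and to prove freeness of the residual action---and this is exactly what lets the paper bypass the $\pi_1$ difficulty you left open.
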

\begin{proof}
  Since \(\mathcal{G}\) acts freely and properly on \(\mathcal{R}(M,T)\), it follows from Ebin's slice theorem \cite{MR0267604} (see also \cite{MR0418147}) that \(\mathcal{R}(M,T)\rightarrow \mathcal{R}(M,T)/\mathcal{G}\) is a locally trivial fiber bundle.
Because \(\mathcal{R}(M,T)\) is contractible,
  \(\mathcal{R}(M,T)/\mathcal{G}\) is weakly homotopy equivalent to \(B\mathcal{G}\).

  Let \(\mathcal{H}=C^{\infty}(M/T,T)/T\) be as in the proof of the previous lemma.
  Then \(\mathcal{H}\) is contractible.
  Hence it follows that \(\mathcal{R}(M,T)\) and \(\mathcal{R}(M,T)/\mathcal{H}\) are weakly homotopy equivalent.

  It follows from Ebin's slice theorem that all \(\mathcal{H}\)-orbits in \(\mathcal{R}(M,T)\) are closed.
Since there is a \(\mathcal{D}(M,T)\)-invariant metric on \(\mathcal{R}(M,T)\), it follows that \(\mathcal{R}(M,T)/\mathcal{H}\) is metriziable.
Hence, \(\mathcal{R}(M,T)/\mathcal{H}\) is paracompact and completely regular.

The \(\mathcal{D}(M,T)\)-invariant metric on \(\mathcal{R}(M,T)\) can be constructed as follows.
Ebin constructs in his paper a sequence of Hilbert manifolds \(\mathcal{R}^s\), \(s\in\mathbb{N}\), such that \(\mathcal{R}(M,\{\id\})=\bigcap_{s\in\mathbb{N}} \mathcal{R}^s\).
On each \(\mathcal{R}^s\) he constructs a \(\Diff(M)\)-invariant Riemannian structure.
This structure induces a \(\Diff(M)\)-invariant metric \(d^s\) on \(\mathcal{R}^s\).
The restrictions of all these metrics \(d^s\) to \(\mathcal{R}(M,\{\id\})\) together induce the \(C^\infty\)-topology on \(\mathcal{R}(M,\{\id\})\).
Therefore the metric
\begin{equation*}
  d(x,y)=\sum_{s\in \mathbb{N}} \min\{d^s(x,y),2^{-s}\}
\end{equation*}
is \(\Diff(M)\)-invariant and induces the \(C^{\infty}\)-topology on \(\mathcal{R}(M,\{\id\})\).

  Since \(\aut(\mathcal{P},\lambda)\rightarrow \aut(\mathcal{P})\) is trivial,
an element \(\tau=(g,f)\in \aut(\mathcal{P},\lambda)\) is of the form \(\tau=(g,f)=(\id,f)\).
Hence, there is a splitting \(\psi:\aut(\mathcal{P},\lambda)\rightarrow \mathcal{D}(M,T)\).
Here \(\tau\) acts on the canonical model\(((M/T)\times T)/\sim\) of \(M\) as identity on the first factor and by \(f\in\aut(T)\) on the second.
To see that this is a diffeomorphism of \(M\), we note that there are
invariant charts \(U\subset M\) which are weakly equivariantly diffeomorphic to \(\C^n\) such that \(U\cap((M/T)\times (\mathbb{Z}_2)^n))/\sim\) is mapped to \(\R^n\subset \C^n\).
For a construction of such charts see \cite[Section 2]{MR3080806}.
The action of \(\tau\) in this chart is given by complex conjugation on some of the factors of \(\C^n\). 

Note that \(\mathcal{H}\) and \(\mathcal{G}\) are normalized by \(\mathcal{K}=\image \psi\).
Moreover, \(\mathcal{K}\) commutes with \(\mathcal{G}/\mathcal{H}\) in \(\mathcal{D}(M,T)/\mathcal{H}\).

  Since \(H=\langle T,\image \psi\rangle\) is a compact Lie subgroup of \(\Diff(M)\),
  there is an \(H\)-invariant metric on \(M\).

  Therefore it follows from \cite[Chapter II.6]{MR0413144} that \[(\mathcal{R}(M,T)/\mathcal{H})/\mathcal{K}=\mathcal{R}(M,T)/(\mathcal{H}\rtimes \mathcal{K})\] is simply connected.
  Moreover, by \cite[Theorem III.7.2]{MR0413144}, one sees that the rational homology of \((\mathcal{R}(M,T)/\mathcal{H})/\mathcal{K}\) vanishes in positive degrees.

  Hence, by the Whitehead theorem, all rational homotopy groups of \(\mathcal{R}(M,T)/(\mathcal{H}\rtimes \mathcal{K})\) vanish.

  By Lemma \ref{sec:action-dm-t-1}, we know that the identity components of \(\mathcal{G}\) and \(\mathcal{D}(M,T)\) are the same.
  Therefore the higher homotopy groups of \(B\mathcal{G}\) and \(B\mathcal{D}(M,T)\) are naturally isomorphic.
  Therefore, by Lemma \ref{sec:action-dm-t} and Ebin's slice theorem, it now suffices to show that \(\mathcal{G}/\mathcal{H}\) acts freely on \(\mathcal{R}(M,T)/(\mathcal{H}\rtimes \mathcal{K})\).

Let \(g\in \mathcal{R}(M,T)\), \(h_1\in \mathcal{G}\), \(h_2\in \mathcal{H}\) such that \(h_1 g= \tau h_2 g\) with \(\tau\in \mathcal{K}\).
Then we have
\begin{equation*}
  \tau^{-1}h_1 g=\tau^{-1} \tau h_2g =h_2g. 
\end{equation*}
Since, by  Lemma \ref{sec:action-dm-t-2}, the isotropy group of \(g\) in \(\mathcal{D}(M,T)\) is finite, it follows that \(\tau^{-1}h_1\) has finite order in \(\mathcal{D}(M,T)/\mathcal{H}\).

Since \(\tau\) acts as the identity on the orbit space, it follows that \(\tau^{-1}h_1\) and \(h_1\) induce the same diffeomorphism on the orbit space.
In particular, \(h_1\) induces a diffeomorphism of finite order \(m\) on \(M/T\)
or equivalently an action of \(\mathbb{Z}/m\mathbb{Z}\) on \(M/T\).
Note that \(h_1\) maps each face of \(M/T\) to itself.

By the slice theorem, for an action of a compact abelian Lie group \(G\) on a connected manifold \(M\) (with or without boundary), there is a unique minimal isotropy group, i.e. a subgroup of \(G\) which is an isotropy group of some orbit in \(M\) and is contained in all other isotropy subgroups.
This subgroup is usually called the principal isotropy subgroup of the action.
The points in \(M\) whose isotropy group is equal to the principal isotropy group form an dense open subset of \(M\).
By the equivariant collaring thoerem, the principal isotropy group of a \(G\)-action on a manifold with boundary is equal to the principal isotropy group of the restricted action on the boundary.
Hence, it follows by induction on the dimension of the faces of \(M/T\) that the diffeomorphism induced by \(h_1\) on \(M/T\) is trivial.
This means that \(h_1\) is contained in \(\mathcal{H}\) and the lemma is proved.
\end{proof}

The proof of the above lemma also shows that the bundle \(\mathcal{G}/\mathcal{H}\rightarrow \mathcal{R}(M,T)/(\mathcal{H}\rtimes\mathcal{K})\rightarrow \mathcal{M}(M,T)\) is rationally a classifying bundle for principal \(\mathcal{G}/\mathcal{H}\)-bundles.
We shall describe the classifying map for bundles \(M\rightarrow E\rightarrow B\)  with structure group \(\mathcal{G}/\mathcal{H}\), fiber \(M\) and paracompact base \(B\).
Since \(\mathcal{G}/\mathcal{H}\) acts on \(M\) by \(T\)-equivariant diffeomorphisms, the \(T\)-action on each fiber extends to an \(T\)-action on \(E\).
Hence \(E\) is a \(T\)-space such that each fiber is \(T\)-invariant and, by paracompactness of \(B\), we may choose a fiberwise \(T\)-invariant Riemannian metric \(g\) on \(E\).
If \(E=B\times M\) is trivial, we therefore have a map 
\begin{align*}
  E=B\times M&\rightarrow \mathcal{R}(M,T)/(\mathcal{H}\rtimes\mathcal{K}) \times M& (b,x)&\mapsto ([g|_{E_b}],x),
\end{align*}
where \(g|_{E_b}\) denotes the restriction of \(g\) to the fiber of \(E\) over \(b\in B\).

If \(E\) is only locally trivial, we still get a map
\begin{equation*}
  E\rightarrow (\mathcal{R}(M,T)/(\mathcal{H}\rtimes\mathcal{K}) \times_{\mathcal{G}/\mathcal{H}} M
\end{equation*}
where on the right-hand side we take the quotient of the diagonal \(\mathcal{G}/\mathcal{H}\)-action.
This map makes the following diagram into a pull-back square,

\begin{equation*}
  \xymatrix{ E \ar[r]\ar[d] & (\mathcal{R}/(\mathcal{H}\rtimes \mathcal{K})) \times_{\mathcal{G}/\mathcal{H}} M \ar[d]\\ B\ar[r]& \mathcal{M}(M,T)},
\end{equation*}

where the bottom map, given by \(b\mapsto [g|_{E_b}]\), is the composition of the classifying
map with the map \(\varphi\)  from the classifying space \(B\mathcal{G}/\mathcal{H}=\mathcal{R}(M,T)/\mathcal{G}\) to \(\mathcal{M}(M,T)=\mathcal{R}(M,T)/\mathcal{D}(M,T)\).
By Lemma \ref{sec:action-dm-t-3}, the map \(\varphi\) is a rational equivalence.

Now we can prove Theorem~\ref{sec:introduction-1} from the introduction.

\begin{proofof}
  The second statement follows from Lemma~\ref{sec:action-dm-t-3} and the above remarks about the classifying maps.

  For the proof of the first statement we fix some notations. Let \(\mathfrak{F}\) be the family of finite subgroups of \(\mathcal{D}(M,T)\) and \(X\) a \(\mathcal{D}(M,T)\)-space. We assume that \(X\) is \(\mathfrak{F}\)-numberable, that is there exists an open covering \(\{U_j;\;j\in J\}\) of \(X\) by \(\mathcal{D}(M,T)\)-subspaces such that:
  \begin{enumerate}
  \item For each \(j\in J\) there exists an equivariant map \(U_J\rightarrow \mathcal{D}(M,T)/H\) with \(H\in \mathfrak{F}\).
  \item There exists a locally finite partion of unity \((t_j;\;j\in J)\) subordinate to \(\{U_j;\;j\in J\}\) such that each \(t_j:X\rightarrow [0,1]\) is a \(\mathcal{D}(M,T)\)-invariant function. 
  \end{enumerate}

We have to show that there exists an equivariant map \(X\rightarrow \mathcal{R}(M,T)\) which is unique up to \(\mathcal{D}(M,T)\)-homotopy.

Since  each compact Lie subgroup of \(N_{\Diff(M)}T\) is the isometry groups of some metric on \(M\) and Ebin's slice theorem, we have equivariant embeddings of \(\mathcal{D}(M,T)/H\) into \(\mathcal{R}(M,T)\) for each finite \(H\).
Hence for each \(j\in J\) we have equivariant maps \(U_j\rightarrow \mathcal{R}(M,T)\).
Using the partion of unity we can convex combine these maps to get an equivariant map \(X\rightarrow \mathcal{R}(M,T)\).

The uniquesness of the map up to homotopy also follows from the convexity of \(\mathcal{R}(M,T)\).
\end{proofof}

\begin{example}
\label{sec:notations}
  We give an example of quasitoric manifolds satisfying the assumptions of the previous lemma.
  
  Let \(n>3\) and \(M_0\) be the projectivization of a sum of \(n-1\) complex line bundles \(E_0,\dots,E_{n-2}\) over \(\C P^1\), such that \(c_1(E_0)=0\) and the first Chern classes of the other bundles are non-trivial, not equal to one and pairwise distinct.
  Then \(M_0\) is a generalized Bott manifold and in particular a quasitoric manifold over \(I\times \Delta^{n-2}\), where \(I\) is the interval and \(\Delta^{n-2}\) denotes an \(n-2\)-dimensional simplex.
For a general description of the combinatorics of the orbit space of a generalized Bott manifold see for example \cite[Section 6]{MR2666127}.

  Let \(M_1=\C P^1\times M_0\) and \(M_2=M_1\#\overline{\C P^n}\) the blow up of \(M_1\) at a single \(T\)-fixed point. The orbit space of \(M_1\) is \(I\times I\times \Delta^{n-2}\). The orbit space of \(M_2\) is the orbit space of \(M_1\) with a vertex cut off, i.e. \(M_2/T=(M_1/T)\# \Delta^n\), where the connected sum is taken at a vertex.

The combinatorial types of the facets of \(M_2/T\) are given as in table \ref{tab:1} below.
Since the combinatorial types of facets in the lines in this table are pairwise distinct, it follows that the lines in the table are invariant under the action of \(\aut(\mathcal{P},\lambda)\).
Therefore the facets in the first two lines are fixed by the action of this group.
The facets in lines 3 and 4 are fixed, because in each of these lines there appears one facet \(F\) with \(\lambda(F)=\{(z,1,\dots,1)\in T^n;\; z\in S^1\}\) but the values of \(\lambda\) on the other facets are distinct.

Finally the facets \(F_1,\dots,F_{n-2}\) in the last line are fixed, by all \((f,g)\in\aut(\mathcal{P},\lambda)\) because \(g\) must permute the subgroups \(\lambda(F_1),\dots,\lambda(F_{n-2})\), which are the coordinate subgroups in \(\{(1,1)\}\times (S^1)^{n-2}\), and must also fix the subgroups \(\lambda(F')\) with \(F'\) from line 3.

Note that depending on the choices of the bundles \(E_0,\dots,E_{n-2}\), \(M_2\) can be spin or non-spin.
\end{example}

\begin{table}
  \centering
  \begin{tabular}{|c|c|l|}
    & combinatorial type & \((\alpha_1,\dots,\alpha_n)\)\\\hline\hline
    \(1\)& \(\Delta^{n-1}\)& \((1,\dots,1)\)\\\hline
    \(1\)& \(I\times I\times \Delta^{n-3}\)& \((0,0,1,\dots,1)\)\\\hline
    \(2\)& \(I\times \Delta^{n-2}\)& \((1,0,0,\dots,0)\)\\
         &                        & \((0,1,k_1,\dots,k_{n-2})\)\\
&& with \(k_i\) pairwise distinct and non-zero\\\hline 
    \(2\)& \(I\times \Delta^{n-2}\) with vertex cut off& \((1,0,0,\dots,0)\)\\
         &                                            & \((0,1,0,\dots,0)\)\\\hline
    \(n-2\) & \(I\times I \times \Delta^{n-3}\) with vertex cut off & \((0,0,0,\dots,0,1,0,\dots,0)\)
  \end{tabular}
  \caption{The combinatorial types of the facets of \(M_2/T\). In the first column the numbers of facets of these type are given. In the last column the values of \(\lambda(F)=\{(z^{\alpha_1},\dots,z^{\alpha_n})\in T^n;\; z\in S^1\}\) are given.}
  \label{tab:1}
\end{table}

\section{The homotopy groups of $\mathcal{D}(M,T)$ for $M$ a quasitoric manifold}

In this section we show that, for some quasitoric manifolds \(M\) of dimension \(2n\), \(n\) odd, the rational homotopy groups of \(\mathcal{D}(M,T)\) are non-trivial in certain degrees.
Let \(P\) be the orbit polytope of \(M\).

Let \(D^n\hookrightarrow P\) be an embedding into the interior of \(P\) such that \(K=P-D^n\) is a collar of \(P\).
Then we have a decomposition
\begin{equation*}
  M=(D^n\times T)\cup_{S^{n-1}\times T} \pi^{-1}(K)=(D^n\times T)\cup_{S^{n-1}\times T} N.
\end{equation*}

From this decomposition we get a homomorphism \(\psi:\widetilde{\Diff}(D^n,\partial D^n)\rightarrow \mathcal{G}/\mathcal{H}\hookrightarrow \mathcal{D}(M,T)\) by letting a diffeomorphism of \(D^n\) act on \(M\) in the natural way on \(D^n\) and by the identity on \(T\) and \(N\).
Here \(\widetilde{\Diff}(D^n,\partial D^n)\) denotes the group of those diffeomorphisms of \(D^n\) which are the identity on some collar neighborhood of the boundary.
By the uniqueness of collars up to isotopy, it is weakly homotopy equivalent to the group \(\Diff(D^n,\partial D^n)\) of all diffeomorphisms of \(D^n\) which are the identity on the boundary.

There is also a natural map \(\widetilde{\Diff}(D^n,\partial D^n)\rightarrow \Diff(P)\), because a diffeomorphism in \(\widetilde{\Diff}(D^n,\partial D^n)\) can be extended by the identity on \(K\) to form a diffeomorphism of \(P\).
This natural map factors as \(\pi_*\circ \psi\), where \(\pi_*:\mathcal{D}(M,T)\rightarrow \Diff(P)\) is the natural map induced by the orbit map.

\begin{lemma}
  For \(0<k<\frac{n}{6}-8\), \(n\) odd and \(k\equiv -1 \mod 4\), the natural map
  \begin{equation*}
    \pi_k(\Diff(D^n,\partial D^n))\otimes \Q\cong\pi_k(\widetilde{\Diff}(D^n,\partial D^n))\otimes \Q\rightarrow \pi_k(\Diff(P))\otimes \Q
  \end{equation*}
is injective and non-trivial. In particular \(\psi\) induces an injective non-trivial homomorphism on these homotopy groups.
\end{lemma}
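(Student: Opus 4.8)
The plan is to identify the domain of the map with the well-understood rational homotopy of $\Diff(D^n,\partial D^n)$, and then to show that the generator survives in $\pi_k(\Diff(P))\otimes\Q$ by detecting it with a characteristic class of the associated $P$-bundles, exactly as in the argument of \cite{MR2680210} which the present paper follows. For the domain: by the uniqueness of collars, as already used above, $\widetilde{\Diff}(D^n,\partial D^n)\simeq\Diff(D^n,\partial D^n)$; and by the theorem of Farrell and Hsiang---whose stable range is produced by feeding Igusa's stability theorem for smooth pseudoisotopies, Waldhausen's $A$-theory and Borel's computation of $K_*(\mathbb{Z})\otimes\Q$ into their argument---one has, for $n$ odd, $k\equiv-1\pmod 4$ and $0<k<\frac{n}{6}-8$,
\[
  \pi_k(\Diff(D^n,\partial D^n))\otimes\Q\;\cong\;\Q\;\neq\;0 ,
\]
generated by some class $\alpha$. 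It remains to show that the image of $\alpha$ in $\pi_k(\Diff(P))\otimes\Q$ is non-zero.

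A diffeomorphism of $D^n$ that is the identity near $\partial D^n$ extends by the identity on $K$ to a compactly supported diffeomorphism of $\interior P$, so the map of the lemma factors as
\[
  \widetilde{\Diff}(D^n,\partial D^n)\;\xrightarrow{\sim}\;\Diff_c(\interior P)\;\hookrightarrow\;\Diff(P).
\]
The first arrow is a weak homotopy equivalence: $\interior P$ is an open disc, it is exhausted by closed discs containing $D^n$, the inclusions between the corresponding groups of diffeomorphisms that are the identity near the boundary are weak equivalences (the usual fact that $\Diff_c(\mathbb{R}^n)$ does not depend on the radius), and $\widetilde{\Diff}(D^n,\partial D^n)$ includes into their union. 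So everything reduces to the injectivity of $\Diff_c(\interior P)\hookrightarrow\Diff(P)$ on $\pi_k(-)\otimes\Q$. One should not hope this inclusion, nor the analogous map into the face-preserving subgroup $\mathcal G/\mathcal H\subset\mathcal D(M,T)$, to be a weak equivalence: restricting a face-preserving diffeomorphism of $P$ to one of its facets lands in the face-preserving diffeomorphism group of that facet, so $\partial P$ genuinely contributes homotopy and a purely formal argument will not suffice.

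To detect $\alpha$, represent it by a smooth $D^n$-bundle $E\to S^{k+1}$ that is trivialised near the fibrewise boundary, and glue the trivial collar bundle $S^{k+1}\times K$ along $S^{k+1}\times S^{n-1}$ to obtain a smooth $P$-bundle $\widehat E\to S^{k+1}$ classifying the image of $\alpha$ in $\pi_k(\Diff(P))$. Since $P$ is contractible, $\widehat E$ is fibrewise homologically trivial, so its higher Franz--Reidemeister torsion $\tau(\widehat E)\in H^{k+1}(S^{k+1};\R)$ is defined (one may instead use the Dwyer--Weiss--Williams smooth torsion, or the Hatcher--Waldhausen characteristic class in $A$-theory). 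By additivity of higher torsion and triviality of the glued-in collar bundle, $\tau(\widehat E)=\tau(E)$, and $\tau(E)\neq 0$ because higher torsion detects the non-trivial rational homotopy of $\Diff(D^n,\partial D^n)$ in the stable range---this is Igusa's theorem, building on Bökstedt, Farrell--Hsiang and Waldhausen, and is exactly the detection mechanism of \cite{MR2680210}. Hence the image of $\alpha$ in $\pi_k(\Diff(P))\otimes\Q$ is non-zero; since the domain is $\Q$, the map is injective and non-trivial. Finally, the map of the lemma equals $\pi_*\circ\psi$, where $\pi_*\colon\mathcal D(M,T)\to\Diff(P)$; hence $\psi$ is injective and non-trivial on $\pi_k(-)\otimes\Q$ as well.

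The main obstacle is this detection step together with pinning down the precise stable range: both rest on the chain Farrell--Hsiang $\to$ Waldhausen's $A$-theory $\to$ Borel, on Igusa's stability theorem, and on the fact that higher torsion detects these classes, and extracting the constant $\frac{n}{6}-8$ from this is the delicate part. The remaining steps---the weak equivalence $\widetilde{\Diff}(D^n,\partial D^n)\simeq\Diff_c(\interior P)$, the precise meaning of the ``collar'' hypothesis, and the bundle gluing---are routine, but must be handled carefully in the category of manifolds with corners.
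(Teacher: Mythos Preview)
Your argument is correct but takes a genuinely different route from the paper. The paper never invokes higher torsion for this lemma: instead it runs the two restriction fibrations
\[
\Diff(D^n,\partial D^n)\to \widetilde{\Diff}(P)\to \Diff(K),
\qquad
\Diff(K,\partial D^n)\to \Diff(K)\to \Diff(\partial D^n),
\]
observes that the right-hand maps have finite index images, and then kills the connecting homomorphism $\pi_{k+1}(\Diff(K))\otimes\Q\to\pi_k(\Diff(D^n,\partial D^n))\otimes\Q$ in two steps: Farrell--Hsiang gives $\pi_{k+1}(\Diff(S^{n-1}))\otimes\Q=0$ in this range, and any family coming from $\Diff(K,\partial D^n)$ extends by the identity over $D^n$ to a family in $\Diff(P)$, hence lies in the image of $\pi_{k+1}(\Diff(P))$ and dies under the connecting map. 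Injectivity and non-triviality then follow from Farrell--Hsiang for the disc.

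Your approach replaces this fibration chase by a characteristic-class detection: glue the Hatcher disc bundle to the trivial $K$-bundle, and use additivity of higher Franz--Reidemeister torsion (together with Igusa's theorem that torsion detects the rational homotopy of $\Diff(D^n,\partial D^n)$) to conclude that the resulting $P$-bundle over $S^{k+1}$ has non-zero torsion. This is heavier---it imports Igusa's additivity and detection theorems---but it has the virtue of using exactly the machinery that the paper deploys in Section~4, so it unifies the two steps conceptually. The paper's route, by contrast, is more elementary once Farrell--Hsiang is granted, and avoids having to make sense of higher torsion for bundles whose fibres are manifolds with corners; you gloss over this last point, and while it is not a real obstruction (one can smooth the corners, or simply regard $P$ as a smooth disc when computing torsion), it deserves a sentence.
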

\begin{proof}
  We have exact sequences
  \begin{equation*}
    1\rightarrow \Diff(D^n,\partial D^n)\rightarrow \widetilde{\Diff}(P) \rightarrow \Diff(K),
  \end{equation*}
where \(\widetilde{\Diff}(P)\) is the group of diffeomorphisms of \(P\) which preserve \(K\),
and
\begin{equation*}
  1\rightarrow \Diff(K,\partial D^n)\rightarrow \Diff(K)\rightarrow \Diff(\partial D^n).
\end{equation*}

Note that \(\widetilde{\Diff}(P)\) is weakly homotopy equivalent to \(\Diff(P)\), by the uniqueness of collars up to isotopy.
Moreover, the images of the right-hand maps in the above sequences have finite index as we explain now.

In the first sequence this is because the group of those diffeomorphisms of a sphere which extend to diffeomorphisms of the disc has finite index in all diffeomorphisms of the sphere.

To see that \(\Diff(K)\rightarrow \Diff(\partial D^n)\) is surjective, we have to show that every diffeomorphism of \(\partial D^n\) extends to a diffeomorphism of \(K\).
This can be done as in the last step of the proof of Theorem 5.1 of \cite{MR3030690}.



Therefore we get exact sequences of rational homotopy groups
\begin{equation*}
  \pi_{k+1}(\Diff(P))\otimes \Q\rightarrow \pi_{k+1}(\Diff(K))\otimes \Q\rightarrow \pi_k(\Diff(D^n,\partial D^n))\otimes \Q\rightarrow \pi_k(\Diff(P))\otimes \Q
\end{equation*}
and
\begin{equation*}
  \pi_{k+1}(\Diff(K,\partial D^n))\otimes \Q\rightarrow \pi_{k+1}(\Diff(K))\otimes \Q\rightarrow \pi_{k+1}(\Diff(\partial D^n))\otimes \Q.
\end{equation*}
By Farrell and Hsiang \cite{MR520509}, we have \(\pi_{k+1}(\Diff(\partial D^n))\otimes \Q=0\).

Moreover every family in the
image of \(\pi_{k+1}(\Diff(K,\partial D^n))\rightarrow \pi_{k+1}(\Diff(K))\) extends to a family of diffeomorphisms of \(P\), by defining the extension to be the identity on \(D^n\).

Therefore the map \(\pi_{k+1}(\Diff(K))\otimes \Q\rightarrow
\pi_k(\Diff(D^n, \partial D^n)\otimes \Q\) is the zero map and the
claim follows from Farrell and Hsiang \cite{MR520509}.
\end{proof}

\section{$\pi_k(\mathcal{M}^+)$ is non-trivial}

In this section we show that \(\pi_k(\mathcal{M}^+(M,T))\) is non-trivial for manifolds as in Example \ref{sec:notations}.

To do so, we need the following theorem which is an equivariant version of Theorem 2.13 of \cite{MR2789750}.

\begin{theorem}
  \label{sec:pi_km-non-triv-1}
  Let \(G\) be a compact Lie group. Let \(X\) be a smooth compact \(G\)-manifold of dimension \(n\) and \(B\) a
compact space. Let \(\{g_b \in \mathcal{R}^+(X,G) : b \in B\}\) be a continuous family of invariant metrics of positive scalar curvature.
Moreover, let \(\iota: G\times_H(S(V)\times D_1(W))\rightarrow X\) be an equivariant embedding, with \(H\subset G\) compact, \(V,W\) orthogonal \(H\)-representations with \(\dim G - \dim H +\dim V +\dim W=n+1\) and \(\dim W > 2\).
Here \(S(V)\) and \(D_1(W)\) denote the unit sphere and the unit disc in \(V\) and \(W\), respectively.

Finally let \(g_{G/H}\) be any \(G\)-invariant metric on \(G/H\) and \(g_V\) be any \(H\)-invariant metric on \(S(V)\). 

 Then, for some \(1>\delta>0\), there is a continuous
map
\begin{align*}
  B&\rightarrow \mathcal{R}^+(X,G)\\
b&\mapsto g^b_{std}
\end{align*}
satisfying

\begin{enumerate}
\item Each metric \(g_{std}^b\)
makes the map \(G\times_H(S(V)\times D_\delta(W))\rightarrow (G/H,g_{G/H})\) into a Riemannian submersion. Each fiber of this map is isometric to \((S(V)\times D_\delta(W),g_V+g_{tor})\), where \(g_{tor}\) denotes a torpedo metric on \(D_\delta(W)\).
Moreover \(g_{std}^b\) is the original metric outside a slightly bigger neighborhood of \(G\times_H(S(V)\times\{0\})\).
\item The the original map \(B\rightarrow \mathcal{R}^+(X,G)\) is homotopic to the new map.
\end{enumerate}
\end{theorem}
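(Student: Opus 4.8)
The plan is to follow the non-equivariant argument of Theorem 2.13 of \cite{MR2789750} and run it equivariantly over the tube $G\times_H(S(V)\times D_1(W))$, using that all constructions can be carried out $G$-invariantly because they are pulled back from the base $G/H$ of the associated bundle. First I would reduce to a local statement near $G\times_H(S(V)\times\{0\})$: since condition (1) only modifies the metrics on a neighborhood of this submanifold and condition (2) is a homotopy statement, everything takes place fiberwise over $G/H$. Working in the fiber $S(V)\times D_1(W)$ one must deform the restricted family $g_b$, which a priori is an arbitrary family of positive scalar curvature metrics, to a family that splits as $g_V + g_{tor}$ near the zero section of the $W$-disc bundle, while staying in $\mathcal{R}^+$ throughout and while keeping everything $H$-equivariant so that it descends to a $G$-invariant family on $X$ via the associated-bundle construction.

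The key steps, in order, would be: (i) Using that $B$ is compact, choose $\delta>0$ small enough that for every $b$ the $\delta$-tube around $G\times_H(S(V)\times\{0\})$ is contained in the given embedded tube and is a Riemannian tubular neighborhood; by the equivariant tubular neighborhood theorem, identify it $G$-equivariantly with (a sub-bundle of) $G\times_H(S(V)\times D_1(W))$. (ii) Apply the parametrized version of the Gromov–Lawson / Chernysh surgery construction, in the form used in \cite{MR2789750}, to the normal $W$-disc bundle over $G\times_H S(V)$: this is where $\dim W>2$ is used, exactly as the codimension-$\ge 3$ hypothesis in ordinary surgery. The construction is local in the tube and natural, so it can be performed $H$-equivariantly in each fiber $S(V)\times D_1(W)$ and then assembled; the parameter $b$ rides along since all choices (bump functions, the bending profile) can be made uniformly in $b\in B$ by compactness. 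This produces the family $g^b_{std}$ which on the $\delta$-tube is a Riemannian submersion over $(G/H,g_{G/H})$ with fibers isometric to $(S(V)\times D_\delta(W), g_V+g_{tor})$ and which agrees with $g_b$ outside a slightly larger tube. (iii) The homotopy in (2) is the standard one: interpolate between $g_b$ and $g^b_{std}$ via the Gromov–Lawson deformation, which stays in $\mathcal{R}^+$ and is continuous in $b$; equivariance is preserved because the interpolation is again pulled back fiberwise.

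The main obstacle I expect is (ii): making the surgery/bending construction genuinely parametrized \emph{and} equivariant at the same time, i.e. checking that the positive-scalar-curvature condition survives uniformly over the compact parameter space $B$ after bending the metric in the normal $D(W)$-directions, and that one can choose the deformation data $H$-equivariantly so that the resulting metrics on $G\times_H(S(V)\times D_1(W))$ are honestly $G$-invariant and glue smoothly to the unchanged part of $X$. Both points are handled in the non-equivariant literature (parametrization in \cite{MR2789750}, and the principle that $G$-invariant metrics on an associated bundle $G\times_H F$ correspond to $H$-invariant families of fiber metrics together with a connection and a base metric), so the proof amounts to combining these two ingredients carefully rather than introducing a new idea; the curvature estimates needed for the bending are exactly those of Gromov–Lawson, applied fiberwise, with constants depending continuously on $b$ and hence bounded on $B$.
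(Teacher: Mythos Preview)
Your proposal is correct and matches the paper's approach: the paper states only that the proof is a direct generalization of Theorem~2.13 of \cite{MR2789750} using the equivariant methods of Theorem~2 in \cite{MR2376283}, and leaves the details to the reader. Your outline supplies exactly those details, and the reference \cite{MR2376283} is where the equivariant Gromov--Lawson construction you describe in step~(ii) is carried out.
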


The proof of this theorem is a direct generalization of the proof of Theorem 2.13 of \cite{MR2789750} using the methods of the proof of Theorem 2 in \cite{MR2376283}.
Therefore we leave it to the reader.

Let \(E\) be the total space of a Hatcher disc bundle \cite{goette01:_morse_i}  over \(S^k\) with fiber \(D^n\) and structure group \(\Diff(D^n,\partial D^n)\).
Note that its classifying map \(S^k\rightarrow B \Diff(D^n,\partial D^n)\) represents a non-trivial element in \(\pi_k(B\Diff(D^n,\partial D^n))\).

Moreover, let
\begin{equation*}
  F= (E\times T)\cup_{S^k\times S^{n-1}\times T} (S^k\times N),
\end{equation*}
where 
\[M=(D^n\times T)\cup_{S^{n-1}\times T} \pi^{-1}(K)=(D^n\times T)\cup_{S^{n-1}\times T} N\]
is a \(2n\)-dimensional quasitoric manifold over the polytope \(P\) and \(K\) is a collar of the boundary of \(P\).

Let \(M_1\subset N\) be a characteristic submanifold.
Then \(M_1\) is a submanifold of codimension two in \(N\) which is fixed pointwise by a circle subgroup \(\lambda(M_1)\) of \(T\).
\(M_1\) is the preimage of a facet of \(P\) under the orbit map.
Denote by \(\tilde{M_1}\) an equivariant tubular neighborhood of \(M_1\).

Then \(F\) is a bundle over \(S^k\) with fiber the quasitoric manifold \(M\) and structure group \(\Diff(D^n,\partial D^n)\).
Note that \(F\) has a natural fiberwise \(T\)-action.

By Theorem 2.9 of \cite{MR2680210}, we have a metric on \(E\) with fiberwise positive scalar curvature which is a product metric at the boundary.
On \(T\times D^2\) we choose an \(T\times S^1\)-invariant metric of non-negative scalar curvature on \(T\times D^2\), which is also a product metric at the boundary. Here \(S^1\) acts by rotation on \(D^2\).

On \((N-\tilde{M_1})\times D^2\), there is an equivariant Morse function \(h\) without critical orbits of co-index less than three.
Indeed, in \cite[Proof of Theorem 2.4]{MR3449263} we constructed an equivariant Morse function \(f\) on \(M-\tilde{M_1}\) without handles of co-index zero.
In this construction we can arrange that the global minimum of \(f\) is attained in a principal orbit.
By restricting \(f\) to the complement of an invariant neighborhood of this principal orbit, we get an equivariant Morse function on a \(T\)-manifold with boundary which is equivariantly diffeomorphic to \(N-\tilde{M_1}\).
This function induces a Morse function \(h'\) on \((N-\tilde{M_1})\times D^2\) such that
\begin{equation*}
  h'(x,y)=f(x)+\|y\|^2\;\;(x,y)\in (N-\tilde{M_1})\times D^2
\end{equation*}
We can deform this function \(h'\) in a neighborhood of the boundary of \(N-\tilde{M}_1\) to an equivariant Morse-function \(h\) in such a way that:
\begin{itemize}
\item There are no critical orbits in a neighborhood of the boundary.
\item The global minimum of \(h\) is attained on \((\partial N)\times D^2\).
\item The global maximum of \(h\) is attained on \((\partial \tilde{M}_1)\times D^2\).
\item The critical orbits of \(h\) are contained in \((N-\tilde{M}_1)\times \{0\}\) and all have co-index at least three.
\end{itemize}

Using this Morse function and Theorem~\ref{sec:pi_km-non-triv-1} we get a fiberwise invariant metric of positive scalar curvature on \(\partial ((F-(S^k\times \tilde{M_1}))\times D^2)\).

Indeed, using the function \(h\), we get an equivariant handle decomposition of \((N-\tilde{M_1})\times D^2\), without handles of codimension less than three, i.e.
\[(N-\tilde{M_1})\times D^2= (\partial N)\times D^2\times I \cup T\times_{H_1} (D(V_1)\times D(W_1))\cup\dots\cup T\times_{H_k}(D(V_k)\times D(W_k),\]
such that the \(H_i\) are closed subgroups of \(T\), the \(V_i\) and \(W_i\) are orthogonal \(H_i\) representations with \(\dim W_i\geq 3\) and the gluing of a handle \(T\times_{H_i}(D(V_i)\times D(W_i))\) is performed along \(T\times_{H_i}(S(V_i)\times D(W_i))\).

Moreover, the restriction of the bundle \(\partial(E\times T\times D^2)\rightarrow S^k\) to \((\partial E)\times T \times D^2\) is trivialized by assumption.
So the restriction of \(g\) to the fibers of this bundle gives a compact family of invariant metrics of positive scalar curvature on \((\partial N)\times D^2\).
By Theorem \ref{sec:pi_km-non-triv-1}, we can assume that this family is in standard form on the attaching locus of \(T\times_{H_1}(D(V_1)\times D(W_1))\) in \(\partial N \times D^2\).
Therefore the family of product metrics on \((\partial N) \times D^2\times I\) extends to a family of  invariant positive scalar curvature metrics on 
\begin{equation*}(\partial N) \times D^2\times I\cup T\times_{H_1}(D(V_1)\times D(W_1)),\end{equation*}
which are product metrics at the boundary.
Continuing in the same manner with the other handles leads to 
 a family of invariant metric of positive scalar curvature on \((N-\tilde{M}_1)\times D^2\) which are product metrics at the boundary.
Gluing this metric together with the metric on \(E\times T\times D^2\) and restricting to the boundary leads to an fiberwise invariant metric of positive scalar curvature on \(\partial((F-S^k\times \tilde{M}_1)\times D^2)\).

Note that Berard Bergery's result \cite{berard83:_scalar} on the existence of a metric of positive scalar curvature on the orbit space of a free torus action, generalizes directly to a family version.
This is because Berard Bergery shows that if \(g\) is an invariant metric of positive scalar curvature on a free \(S^1\)-manifold \(M\), then \(f^{2/\dim M - 2}\cdot g^*\) has positive scalar curvature, where \(g^*\) is the quotient metric of \(g\) and \(f\) is the length of the \(S^1\)-orbits in \(M\).
This construction clearly generalizes to families of metrics.
Moreover the metrics on the orbit space will be invariant under every Lie group action which is induced on \(M/S^1\) from an action on \(M\) which commutes with \(S^1\) and leaves the metrics on \(M\) invariant (see \cite[Theorem 2.2]{MR3449263}  for the case of a single metric).

We have a free action of the diagonal in \(\lambda(M_1)\times S^1\cong S^1\times S^1\) on 
\[\partial ((F-(S^k\times \tilde{M_1}))\times D^2)=(S^k \times \partial \tilde{M_1}\times D^2)\cup_{S^k\times(\partial \tilde{M_1})\times S^1}(F-S^k\times \tilde{M}_1)\times S^1.\]
Here the first factor of \(\lambda(M_1)\times S^1\) acts as a subgroup of \(T\) on \(F\) and the second factor acts on \(D^2\) by rotation.

The orbit space of this free action is
\[(S^k \times \tilde{M}_1)\cup_{S^k\times (\partial \tilde{M_1})}(F-S^k\times \tilde{M}_1),\]
which is clearly equivariantly diffeomorphic to \(F\).

Hence, with the remarks from above one gets an invariant metric of fiberwise positive scalar curvature on \(F\) in the same way as in the case of a single metric (see \cite[Proof of Theorem 2.4]{MR3449263} for details). 

This metric defines an element \(\gamma\)  in 
\(\pi_k(\mathcal{M}^+(M,T))\otimes \mathbb{Q}\).
The image of \(\gamma\) in 
\[\pi_k(\mathcal{M}(M,T))\otimes \mathbb{Q}\cong \pi_k(B\mathcal{D}(M,T))\otimes \mathbb{Q}\]
 is represented by the classifying map for our Hatcher bundle \(E\).

Therefore it follows from the lemmas in the previous two sections, that \(\gamma\) is non-trivial if \(M\) is as in Example \ref{sec:notations} because the classifying map of a Hatcher bundle represents a non-trivial element in the homotopy groups of \(B\Diff(D^n,\partial D^n)\).

Therefore we have proved the following theorem:

\begin{theorem}
\label{sec:pi_km-non-triv}
  Let \(M\) be a quasitoric manifold of dimension \(2n\) such that \(\aut(\mathcal{P},\lambda)\rightarrow \aut(\mathcal{P})\) is trivial.
  Then for \(0<k<\frac{n}{6}-7\), \(n\) odd and \(k\equiv 0\mod 4\),
  \(\pi_k(\mathcal{M}^+)\otimes \Q\) is non-trivial, where
  \(\mathcal{M}^+\) is some component of \(\mathcal{M}^+(M;T)\).
\end{theorem}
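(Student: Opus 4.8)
The plan is to assemble the ingredients proven in the preceding sections into a single construction, following the strategy of \cite{MR2680210}. First I would fix a quasitoric manifold $M$ of dimension $2n$ as in Example \ref{sec:notations} (so that the hypothesis $\aut(\mathcal{P},\lambda)\to\aut(\mathcal{P})$ is trivial is satisfied), and for $0<k<\frac{n}{6}-7$ with $n$ odd and $k\equiv 0\bmod 4$ construct the Hatcher disc bundle $E\to S^k$ with fiber $D^n$ and structure group $\Diff(D^n,\partial D^n)$ whose classifying map is non-trivial in $\pi_k(B\Diff(D^n,\partial D^n))$. Using the decomposition $M=(D^n\times T)\cup_{S^{n-1}\times T} N$ coming from a collar $K$ of $\partial P$, I would form the associated $M$-bundle $F\to S^k$, which carries a natural fiberwise $T$-action, and note that via the homomorphism $\psi:\widetilde{\Diff}(D^n,\partial D^n)\to \mathcal{G}/\mathcal{H}\hookrightarrow \mathcal{D}(M,T)$ its classifying map represents a class in $\pi_k(B\mathcal{D}(M,T))$ which the Lemma of Section 3 (combined with the injectivity of $\pi_k(\Diff(D^n,\partial D^n))\to\pi_k(\Diff(P))$) shows to be non-trivial after tensoring with $\Q$.

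Next I would equip $F$ with a fiberwise $T$-invariant metric of positive scalar curvature. This is the technical heart of the argument. I would build it by the surgery-theoretic method: cross $F-S^k\times\tilde M_1$ with $D^2$ (where $M_1\subset N$ is a characteristic submanifold with its equivariant tubular neighborhood $\tilde M_1$), use the equivariant Morse function $h$ on $(N-\tilde M_1)\times D^2$ with no critical orbits of co-index less than three (from \cite[Proof of Theorem 2.4]{MR3449263}) to obtain an equivariant handle decomposition, and then apply Theorem~\ref{sec:pi_km-non-triv-1} iteratively to each handle to propagate a family of invariant positive-scalar-curvature metrics, starting from the fiberwise positive-scalar-curvature metric on $E$ (Theorem 2.9 of \cite{MR2680210}) crossed with a suitable $T\times S^1$-invariant non-negative-scalar-curvature metric on $T\times D^2$. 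This produces a fiberwise invariant metric of positive scalar curvature on $\partial\big((F-(S^k\times\tilde M_1))\times D^2\big)$; then I would observe that the diagonal circle in $\lambda(M_1)\times S^1$ acts freely on this boundary with orbit space equivariantly diffeomorphic to $F$, and invoke the family version of Bérard Bergery's quotient construction \cite{berard83:_scalar} to push the metric down to a fiberwise invariant positive-scalar-curvature metric on $F$.

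Finally I would invoke Theorem~\ref{sec:introduction-1} and the discussion following Lemma~\ref{sec:action-dm-t-3}: the fiberwise positive-scalar-curvature metric on $F$ determines, via $b\mapsto[g|_{F_b}]$, a class $\gamma\in\pi_k(\mathcal{M}^+(M,T))\otimes\Q$ lying in some component $\mathcal{M}^+$; its image in $\pi_k(\mathcal{M}(M,T))\otimes\Q\cong\pi_k(B\mathcal{D}(M,T))\otimes\Q$ (the isomorphism being Lemma~\ref{sec:action-dm-t-3}, applicable since $\aut(\mathcal{P},\lambda)\to\aut(\mathcal{P})$ is trivial) is precisely the classifying map of the Hatcher bundle. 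Since that class is non-trivial by the Lemma of Section 3 and Farrell--Hsiang \cite{MR520509}, $\gamma\neq 0$, which proves the theorem.

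The main obstacle I anticipate is the second step: carrying out the equivariant surgery on the family while keeping positive scalar curvature on every fiber. One has to be careful that the handles in the decomposition of $(N-\tilde M_1)\times D^2$ genuinely all have co-index $\geq 3$ (equivalently the discs $D(W_i)$ have $\dim W_i\geq 3$, as required by the hypothesis $\dim W>2$ in Theorem~\ref{sec:pi_km-non-triv-1}), that the standardization of the family near each successive attaching locus $T\times_{H_i}(S(V_i)\times D(W_i))$ can be performed compatibly with the already-constructed metric, and that at the end the Bérard Bergery quotient metric is invariant under the residual $T$-action. Everything else is either quoted from earlier sections or is a routine assembly.
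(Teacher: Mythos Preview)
Your proposal is correct and follows essentially the same approach as the paper: construct the $M$-bundle $F\to S^k$ from a Hatcher disc bundle, build a fiberwise $T$-invariant positive scalar curvature metric on $F$ via the $\times D^2$ trick, the equivariant handle decomposition of $(N-\tilde M_1)\times D^2$, iterated application of Theorem~\ref{sec:pi_km-non-triv-1}, and the family version of B\'erard Bergery's quotient construction, then detect the resulting class $\gamma\in\pi_k(\mathcal{M}^+(M,T))\otimes\Q$ by its image under $\pi_k(\mathcal{M}(M,T))\otimes\Q\cong\pi_k(B\mathcal{D}(M,T))\otimes\Q$ using Lemma~\ref{sec:action-dm-t-3} and the lemma of Section~3. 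One cosmetic point: the theorem is stated for any quasitoric $M$ with $\aut(\mathcal{P},\lambda)\to\aut(\mathcal{P})$ trivial, not only those of Example~\ref{sec:notations}; your argument (like the paper's) goes through verbatim under that hypothesis alone, so there is no need to restrict to the Example.
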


\bibliography{moduli}
\bibliographystyle{alpha}

\end{document}